\newtheorem{theorem}{Theorem}
\newtheorem{lemma}[theorem]{Lemma}
\newtheorem{corollary}[theorem]{Corollary}
\newtheorem{conjecture}[theorem]{Conjecture}
\newtheorem{observation}[theorem]{Observation}
\newcommand\size[1] {\left|{#1}\right|}
\newcommand\eps \varepsilon
\newcommand\sm \setminus
\newcommand\Set[2] {\left\{{#1}:\,{#2}\right\}}
\newcommand\Setx[1] {\left\{{#1}\right\}}
\newcommand\sub {\subseteq}
\newcommand{\eind}[2]{#1[{#2}]}
\newcommand{\union}[2]{{#2}^{(#1)}}
\newcommand\conn[1] {\eta({#1})}
\newcommand{\subc}[2]{#1\left[#2\right]}
\newcommand\rankx[2] {\mathrm{rank}_{#1}({#2})}
\newcommand\edgedom[1] {\gamma^E({#1})}
\newcommand\pathdom[1] {\gamma^\mathrm{v}({#1})}
\newcommand\poly[1] {\|#1\|}
\newcommand\CC {{\mathcal C}}
\newcommand\MM {{\mathcal M}}
\newcommand\NN {{\mathcal N}}
\newcommand\II {{\mathcal I}}
\newcommand\KK {{\mathcal K}}
\newcommand\RR {{\mathbb R}}
\newcommand{\arbf}[1]{\Upsilon_f(#1)}
\newcommand{\arb}[1]{\Upsilon(#1)}
\title{\textbf{Covering a graph by forests\\and a matching}}
\author{Tom\'{a}\v{s} Kaiser$^1$ \and 
  Micka\"{e}l Montassier$^2$ \and
  Andr\'{e} Raspaud$^2$}
\date{}
\begin{document}
\maketitle
\footnotetext[1]{Department of Mathematics and Institute for
  Theoretical Computer Science, University of West Bohemia,
  Univerzitn\'{\i}~8, 306~14~Plze\v{n}, Czech Republic. E-mail:
  \texttt{kaisert@kma.zcu.cz}. Supported by grant GA\v{C}R 201/09/0197
  of the Czech Science Foundation and by project 1M0545 and Research
  Plan MSM 4977751301 of the Czech Ministry of Education. This
  research was done during the first author's visit to LaBRI.}
\footnotetext[2]{LaBRI, Universit\'{e} Bordeaux I, 351, cours de la
  Lib\'{e}ration, 33405~Talence cedex, France. E-mail:
  \{\texttt{raspaud,montassier}\}\texttt{@labri.fr}.}

\begin{abstract}
  We prove that for any positive integer $k$, the edges of any graph
  whose fractional arboricity is at most $k + 1/(3k+2)$ can be
  decomposed into $k$ forests and a matching.
\end{abstract}

%%%%%%%%%%%%%%%%%%%%%%%%%%%%%%%%%%%%%%%%%%%%%%%%%%%%%%%%%%%%%%%%%%%%%%

\section{Introduction}\label{sec:intro}

The \emph{arboricity} $\arb G$ of a graph $G$ is the least number $k$
such that the edge set of $G$ can be covered by $k$ forests. A
classical result of Nash-Williams~\cite{NW:decomposition} states that
a trivial lower bound to arboricity actually gives the right value:
\begin{theorem}\label{t:arb}
  For any graph $G$,
  \begin{equation*}
    \arb G = \max_H \Bigl\lceil
    \frac{\size{E(H)}}{\size{V(H)}-1} 
    \Bigr\rceil,
  \end{equation*}
  where the maximum is taken over all subgraphs $H$ of $G$.
\end{theorem}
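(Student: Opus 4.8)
The plan is to prove the two inequalities separately, disposing of the easy lower bound first. If $E(G)$ is covered by $t$ forests and $H$ is a subgraph of $G$ with $\size{V(H)}\ge 2$, then the subgraph induced by each of the $t$ forests on $V(H)$ is again a forest and so contains at most $\size{V(H)}-1$ edges of $H$; summing over the forests gives $\size{E(H)}\le t\,(\size{V(H)}-1)$, so $t\ge \size{E(H)}/(\size{V(H)}-1)$, and as $t$ is an integer $t\ge \lceil \size{E(H)}/(\size{V(H)}-1)\rceil$. Maximizing over $H$ gives $\arb G\ge \max_H \lceil \size{E(H)}/(\size{V(H)}-1)\rceil$.

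For the reverse inequality I would apply the matroid union (covering) theorem of Nash-Williams and Edmonds to the cycle matroid $M(G)$ of $G$: its ground set is $E(G)$, its independent sets are exactly the edge sets of forests, and its rank function is $r(F)=\size{V(G)}-c(F)$, where $c(F)$ is the number of components of the spanning subgraph $(V(G),F)$. That theorem says $E(G)$ partitions into $t$ independent sets if and only if $\size{F}\le t\,r(F)$ for every $F\sub E(G)$; equivalently, the least such $t$ is $\max_{F:\,r(F)>0}\lceil \size{F}/r(F)\rceil$. It then remains to match this matroidal quantity with the graph-theoretic maximum. Given $F\sub E(G)$, let $H_1,\dots,H_c$ be the components of $(V(G),F)$ that contain an edge; then $\size{F}=\sum_j\size{E(H_j)}$ and $r(F)=\sum_j(\size{V(H_j)}-1)$, so by the mediant inequality $\size{F}/r(F)\le \max_j \size{E(H_j)}/(\size{V(H_j)}-1)$. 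Since each $H_j$ is a subgraph of $G$ and, conversely, every subgraph $H$ of $G$ is realized by taking $F=E(H)$, the two maxima coincide, and they continue to coincide after applying the (monotone) ceiling. Hence $\arb G=\max_H\lceil \size{E(H)}/(\size{V(H)}-1)\rceil$.

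The real content is packed into the matroid covering theorem, which is the step I expect to be the main obstacle if one wants a self-contained proof; quoting it, the rest is bookkeeping. A matroid-free alternative is induction on $\size{E(G)}$: with $k=\max_H\lceil \size{E(H)}/(\size{V(H)}-1)\rceil$, remove an edge $e=uv$, decompose $G-e$ into $k$ forests by the induction hypothesis, and either add $e$ to a part containing no $uv$-path, or, if every part contains a $uv$-path, run a forest-exchange argument over all $k$-forest decompositions of $G-e$: letting $S$ be the set of vertices reachable from $u$ through such exchanges, one shows that when the process halts the induced subgraph of $G$ on $S$ has more than $k\,(\size{S}-1)$ edges, contradicting the hypothesis. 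Getting the exchange process and the extraction of this bad subgraph right is the delicate point here, and it is precisely the graph-theoretic shadow of the augmenting-path proof of the covering theorem.
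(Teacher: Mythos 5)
Your proof is correct. A small remark: the paper does not actually supply a proof of Theorem~\ref{t:arb}; it cites Nash-Williams and remarks in Section~\ref{sec:matroid} that the theorem ``has a natural proof using matroid theory'' based on the matroid union theorem (Theorem~\ref{t:union}). Your argument carries out precisely this suggested route: the easy direction is the forest-counting lower bound, and the hard direction comes from the matroid covering consequence of the union theorem (equivalently, Corollary~\ref{cor:mk} with $X=E$: $E$ is covered by $k$ forests iff $\rankx{\union k\MM}E=\size E$, which unwinds to $\size T\le k\cdot\rankx\MM T$ for all $T\sub E$), combined with Lemma~\ref{l:rank}'s description of the cycle-matroid rank and the mediant inequality to pass from subsets of edges to subgraphs. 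The bookkeeping steps you mention — restricting to components that contain an edge, and commuting the ceiling with the maximum — are handled correctly. Your sketch of a matroid-free inductive alternative via forest exchanges is the standard direct proof of Nash-Williams' theorem and is also sound in outline, though as you say the extraction of the dense subgraph at the end of the exchange process is the delicate part and you have not written it out.

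One very minor caveat: the formula $\size{E(H)}/(\size{V(H)}-1)$ is taken over subgraphs with at least two vertices (to avoid division by zero), and the presence of loops (which the paper allows) would make arboricity undefined; your argument, like the paper's statement, implicitly assumes $G$ is loopless, which is the intended reading.
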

(Here and in the rest of the paper, we write $V(H)$ and $E(H)$ for the
vertex set and the edge set of a graph $H$, respectively, and the
graphs may contain loops and parallel edges. For any graph-theoretical
notions not defined here, we refer the reader to
Diestel~\cite{Die:graph}.)

Payan~\cite{P:graphes} defined the \emph{fractional arboricity} $\arbf G$ of
$G$ by
\begin{equation*}
  \arbf G = \max_{H\sub G} \frac{\size{E(H)}}{\size{V(H)}-1}.
\end{equation*}
Thus, $\arb G = \lceil\arbf G\rceil$, and one may ask whether $\arbf
G$ is a finer measure of the properties of $G$ than $\arb G$. In
particular, suppose that $\arbf G = k + \eps$ for some integer $k$ and
small $\eps > 0$. By Theorem~\ref{t:arb}, $E(G)$ can be covered by
$k+1$ forests, but it is natural to ask whether one of these forests
can be restricted to have, for instance, bounded maximum degree or
bounded maximum component size. The problem was studied by Montassier
et al.~\cite{MORZ:covering} and in two cases, an affirmative answer
was obtained. These are summarized in the following theorem which
improves earlier results on decompositions of planar
graphs~\cite{BKSY:decomposing,G:covering}:

\begin{theorem}\label{t:43}
  Let $G$ be a graph.
  \begin{enumerate}[\quad(i)]
  \item If $\arbf G \leq \tfrac43$, then $E(G)$ can be covered by a
    forest and a matching.
  \item If $\arbf G \leq \tfrac32$, then $E(G)$ can be covered by two
    forests, one of which has maximum degree at most 2.
  \end{enumerate}
\end{theorem}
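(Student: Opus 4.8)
I would prove both parts by a minimal-counterexample argument combined with discharging. Let $G$ be a counterexample minimizing $|V(G)|+|E(G)|$. Rewriting the hypotheses, every subgraph $H$ of $G$ satisfies $3|E(H)|\le 4|V(H)|-4$ in part~(i) and $2|E(H)|\le 3|V(H)|-3$ in part~(ii); applied to subgraphs, these inequalities make $G$ simple and $2$-degenerate (every subgraph has a vertex of degree at most $2$), and applied to $G$ itself they yield $\sum_v\bigl(d(v)-\tfrac83\bigr)\le-\tfrac83<0$ in case~(i) and $\sum_v\bigl(d(v)-3\bigr)\le-3<0$ in case~(ii). The discharging will redistribute the charges $d(v)-\tfrac83$ (resp.\ $d(v)-3$) so that they become nonnegative at every vertex, contradicting these sums; so the point is to draw out of minimality enough local structure to make that redistribution possible. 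Note at the outset that part~(i) forces $G$ to be triangle-free --- a triangle has density $\tfrac32>\tfrac43$ --- whereas in part~(ii) triangles, and even copies of $K_{2,3}$, are allowed, since they meet the bound $\tfrac32$ with equality.

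\textbf{Part (i).} First I would clear out the trivial cases: a vertex of degree at most $1$ is deleted and its edge (if any) returned to the forest, so $\delta(G)\ge2$. Then I would handle degree-$2$ vertices. If $v$ has degree $2$ with neighbours $u,w$ (distinct and nonadjacent, as $G$ is triangle-free) having no common neighbour other than $v$, then suppressing $v$ --- replacing it by the edge $uw$ --- creates no triangle and preserves the density bound, and a cover of $G'=G-v+uw$ by a forest and a matching lifts back to $G$: subdivide $uw$ if it lies in the forest, and otherwise put $vw$ in the forest and replace $uw$ by $uv$ in the matching. Hence every degree-$2$ vertex lies on a $4$-cycle. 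A short deletion argument for a $4$-cycle carrying two adjacent degree-$2$ vertices --- delete them, and when restoring the cycle put the edge joining the two into the matching and the two edges incident to them into the forest --- then shows that both neighbours of every degree-$2$ vertex have degree at least $3$. The density bound now controls how degree-$2$ vertices can sit on their $4$-cycles (for instance a degree-$3$ vertex with three degree-$2$ neighbours would force a $K_{2,3}$), and a discharging rule that sends a suitable fraction of charge --- essentially a third per incident degree-$2$ vertex --- from each vertex of degree at least $3$ makes every charge nonnegative, a contradiction.

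\textbf{Part (ii).} The same blueprint applies, with the analogous reductions for vertices of degree at most $1$, for degree-$2$ vertices with harmless neighbours, and for small pendant gadgets; the new points are that subdividing an edge of a forest of maximum degree $\le 2$ keeps its maximum degree $\le 2$, and that the three edges of an absorbed triangle can be placed as a two-edge path in the other forest together with one isolated edge in the forest of maximum degree $\le 2$. After these reductions every degree-$2$ vertex lies in a triangle $uvw$ with $d(u),d(w)\ge3$, a vertex of degree $3$ cannot have two degree-$2$ neighbours (their triangle partners would coincide, giving a $K_4$ minus an edge, of density $\tfrac53>\tfrac32$), and the discharging weights become $d(v)-3$.

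\textbf{The hard part.} The main obstacle --- and where essentially all the work is --- is the full list of reducible configurations together with the lifting arguments they need, and it is sharpest in part~(ii): with weights $d(v)-3$ there is no slack, and the configuration that would defeat a naive discharging, a vertex of degree $4$ with two degree-$2$ neighbours whose triangle partners are distinct, spans a $5$-vertex, $6$-edge graph of density exactly $\tfrac32$, which the hypothesis does not forbid. Such configurations --- and the larger density-critical ones containing them, which also block the basic degree-$2$ reduction; already in part~(i) the same occurs with $4$-cycles whose corner vertices have degree $3$ and with $5$-cycles carrying a chord --- have to be shown reducible by deleting the degree-$2$ vertices and redistributing the returned edges between the two forests subject to the degree-$2$ cap. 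The delicate point in each such lifting step is that a cover of the smaller graph need not extend directly: its matching, or its bounded-degree forest, may saturate exactly the vertex one needs to re-use, or its main forest may already join the two endpoints between which a path has to be inserted, so one must check that in every case either the cover lifts unchanged or a small local exchange (recolouring one or two edges) removes the obstruction. Once this catalogue is assembled and verified, the discharging gives the contradiction exactly as sketched.
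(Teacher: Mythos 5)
The paper does not actually prove Theorem~\ref{t:43}; it is quoted from Montassier, Ossona de Mendez, Raspaud and Zhu~\cite{MORZ:covering}, and the machinery developed in the present paper (matroid intersection together with the topological lower bound on the connectivity of matching complexes) only recovers the weaker threshold $\arbf G \le 1+\tfrac15=\tfrac65$ for a decomposition into a forest and a matching, not $\tfrac43$. Your blueprint --- minimal counterexample, degeneracy, charges $d(v)-\tfrac83$ and $d(v)-3$, reducibility of degree-$2$ configurations --- is indeed the style of argument used in~\cite{MORZ:covering} and in its planar predecessors~\cite{BKSY:decomposing,G:covering}. But as you say yourself, the load-bearing content (the catalogue of reducible configurations and the verification that every lifting goes through) is left undone, and that is where such proofs live or die; so this is an outline, not a proof.

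Beyond the acknowledged incompleteness, at least one reduction you do state is unsound as written. In part~(i) you suppress a degree-$2$ vertex $v$ with nonadjacent neighbours $u,w$ having no common neighbour other than $v$, claiming that $G'=G-v+uw$ ``preserves the density bound.'' It need not. Take $G=\Theta_{2,3,3}$, the graph in which $u$ and $w$ are joined by internally disjoint paths of lengths $2$, $3$, $3$, and let $v$ be the midpoint of the length-$2$ path. Then $G$ has $7$ vertices and $8$ edges, $\arbf G=\tfrac{8}{6}=\tfrac43$, and the hypotheses of your reduction hold ($u\not\sim w$ and their only common neighbour is $v$); but $G'$ has $6$ vertices and $7$ edges, so $\arbf{G'}\ge\tfrac{7}{5}>\tfrac43$, and minimality gives you nothing about $G'$. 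The alternative of deleting $v$ and re-inserting $uv,vw$ into a cover of $G-v$ also breaks down when $u$ and $w$ lie in the same component of the forest and are both saturated by the matching: neither edge at $v$ can then be placed without a nonlocal recolouring. Designing exactly those exchange arguments so that some reduction always applies is the real substance of the theorem, and it is precisely what is missing here.
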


In~\cite{MORZ:covering}, a general conjecture was proposed which would
include Theorem~\ref{t:43} as a special case:

\begin{conjecture}\label{conj:dragon}
  Let $k$ and $d$ be positive integers. If $G$ is a graph with $\arbf
  G \leq k + \frac{d}{k+d+1}$, then $E(G)$ can be decomposed into
  $k+1$ forests, one of which has maximum degree at most $d$.
\end{conjecture}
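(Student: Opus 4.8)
\medskip
\noindent\textbf{A strategy for Conjecture~\ref{conj:dragon}.}
Fix positive integers $k,d$ and set $\rho := k+\tfrac{d}{k+d+1}$, so that $k<\rho<k+1$. If $\arbf G\le k$, then by Theorem~\ref{t:arb} $E(G)$ is already a union of $k$ forests, and appending the empty forest (a matching of maximum degree $0\le d$) gives the conclusion; so assume $k<\arbf G\le\rho$, whence $\arb G=\lceil\arbf G\rceil=k+1$. The plan is to argue from an extremal counterexample: suppose $G$ fails the conclusion with $\size{V(G)}+\size{E(G)}$ smallest possible. By Theorem~\ref{t:arb}, $E(G)$ decomposes into $k+1$ forests; call a decomposition $\mathcal D=(F_1,\dots,F_k,F)$ \emph{admissible} and single out $F$ as the forest we wish to make light. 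For $v\in V(G)$ put $\operatorname{ex}_{\mathcal D}(v)=\max\{0,\deg_F(v)-d\}$ and order the admissible decompositions by the vector $(\operatorname{ex}_{\mathcal D}(v))_v$, read in non-increasing order and compared lexicographically, with ties broken by a secondary potential (roughly, how ``movable'' the heavy $F$-edges are) to be pinned down in the course of the proof. Fix $\mathcal D$ optimal in this order; since $G$ is a counterexample, some vertex $r$ has $\operatorname{ex}_{\mathcal D}(r)>0$. Minimality of $G$ underwrites the usual local reductions, so that the structure extracted below is not vacuous.

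The heart of the matter is a local recolouring that tries to discharge the excess at $r$. Given a heavy edge $e=ru$ of $F$, attempt to move $e$ into some $F_i$; if $F_i+e$ contains a cycle, pick an edge $e'$ on its fundamental cycle and move $e'$ back into $F$, but only when this creates neither a cycle in $F$ nor new excess, and then recurse on $e'$. Call a maximal such run of moves a \emph{fixing sequence}. If some fixing sequence terminates with an edge that can be dropped into an $F_i$ without closing a cycle, we obtain an admissible decomposition with strictly smaller excess at $r$ and unchanged excess elsewhere, contradicting optimality of $\mathcal D$; hence \emph{every} fixing sequence from $r$ is blocked. Let $R$ collect the vertices and edges met by initial segments of fixing sequences from $r$, and let $H$ be the subgraph of $G$ spanned by $R$. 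Unwinding the blocking condition shows that, for each $i$, essentially every edge of $H$ outside $F_i$ closes a fundamental $F_i$-cycle all of whose edges are themselves blocked --- forcing $F_i\cap H$ to be close to a spanning tree of $H$ --- while the excess at the heavy $F$-vertices inside $H$ cannot be pushed out of $H$, so $H$ retains a substantial set of vertices of $F$-degree at least $d+1$. A correct choice of the secondary potential is what makes this last set large rather than merely nonempty.

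It then remains to count the edges of $G$ inside a suitably chosen induced piece of $H$. Each $F_i\cap H$ is a forest, hence has at most $\size{V(H)}-1$ edges; the blocking condition supplies a corresponding lower bound of $\size{V(H)}-1$ up to a controlled error, so $F_1,\dots,F_k$ jointly account for at least $k\bigl(\size{V(H)}-1\bigr)$ edges minus that error. Separately, since each of the many retained heavy vertices has $F$-degree at least $d+1$, a degree sum in $F\cap H$ forces $F\cap H$ to carry more than $\tfrac{d}{k+d+1}\bigl(\size{V(H)}-1\bigr)$ edges. Adding the two estimates yields $\size{E(H)}>\rho\bigl(\size{V(H)}-1\bigr)$, i.e.\ $\arbf G>\rho$ --- the contradiction we want.

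\textbf{Main obstacle.} The entire difficulty is concentrated in the passage of the last two paragraphs: one must prove the fixing sequences terminate, that $R$ (hence $H$) is genuinely closed under the moves, and --- hardest of all --- that the error term in ``$F_i\cap H$ is almost a spanning tree'' and the lower bound on the number of heavy $F$-vertices can be controlled \emph{simultaneously}, so that the two contributions sum to more than exactly $\rho\bigl(\size{V(H)}-1\bigr)$. Since the bound $k+\tfrac{d}{k+d+1}$ is tight for every $k$ and $d$, the argument has no slack to spare, and the degree constraint has to be propagated through arbitrarily long cascades of reinsertions. A workable way to run the bookkeeping is a charging scheme: give each vertex of $H$ a budget of $d$ tokens ``in $F$'' and one token ``in each $F_i$'', charge every edge of $H$ to an incident vertex, and show that the blocked structure wastes at most a $\tfrac{1}{k+d+1}$-fraction of a single token per vertex on average. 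Turning that heuristic into a rigorous accounting --- with the right notion of $H$ and the right extremal $\mathcal D$ --- is exactly what a complete proof has to accomplish; it is this optimization that the more restrictive hypothesis $k+\tfrac{1}{3k+2}$ of the result announced in the abstract lets one sidestep in the matching case $d=1$.
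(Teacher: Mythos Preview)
The statement you are addressing is Conjecture~\ref{conj:dragon}, which the paper does \emph{not} prove; it is explicitly presented as open for $k\ge 2$ or $d\ge 7$. There is therefore no ``paper's own proof'' to compare against. Your write-up is candid about this: you label it a strategy, you flag the main obstacle, and you concede that the required bookkeeping has not been carried out. As a proof, then, it is incomplete by your own admission --- the simultaneous control of the error in ``$F_i\cap H$ is almost a spanning tree'' and of the number of heavy $F$-vertices is asserted as what must be done, not done; likewise the termination of fixing sequences and the closure of $R$ under moves are stated as obligations rather than discharged. These are genuine gaps, and since the bound $k+\tfrac{d}{k+d+1}$ is sharp, there is no slack to paper over them.

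It is worth noting that the paper's own contribution, Theorem~\ref{t:main}, treats only the special case $d=1$, and with the weaker threshold $k+\tfrac1{3k+2}$ in place of the conjectured $k+\tfrac1{k+2}$, by an entirely different route: no extremal counterexample, no recolouring cascades, no discharging. Instead the paper encodes ``union of $k$ forests'' via the dual $\NN$ of the $k$-fold union of the cycle matroid and ``matching'' via the matching complex $\CC$, and then applies the Aharoni--Berger topological extension of matroid intersection (Theorem~\ref{t:inters}). The real work lies in showing that complements of flats of $\NN$ induce subgraphs of minimum degree at least $k+1$ (Lemma~\ref{l:mindeg}) and then bounding the connectivity of the induced matching complexes through 2-path domination (Lemma~\ref{l:conn}). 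The looser constant $\tfrac1{3k+2}$ is precisely the cost of that detour through connectivity estimates. Your recolouring outline is the natural line of attack on the full conjecture --- and is in the spirit of the arguments behind the small known cases cited in the paper --- but nothing here supplies the structural lemma you would need to close it.
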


A. V. Kostochka and X. Zhu (personal communication) proved
Conjecture~\ref{conj:dragon} for $k = 1$ and $3\leq d \leq 6$. For $k
\geq 2$ or $d\geq 7$, the conjecture is open.

Another partial result of~\cite{MORZ:covering} toward
Conjecture~\ref{conj:dragon} is the following:
\begin{theorem}\label{t:weaker}
  If $G$ has fractional arboricity $\arbf G \leq k + \eps$, where
  $k$ is an integer and $0\leq \eps < 1$, then $E(G)$ can be
  covered by $k$ forests and a graph of maximum degree at most $d$,
  where
  \begin{equation*}
    d = \Bigl\lceil \frac{(k+1)(k-1+2\eps)}{1-\eps} \Bigr\rceil.
  \end{equation*}
\end{theorem}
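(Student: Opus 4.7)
The proof plan is to start with a decomposition $E(G) = F_0 \cup F_1 \cup \cdots \cup F_k$ into $k+1$ forests, which exists by Theorem~\ref{t:arb} since $\arb G \leq k+1$, and then to improve it so as to control $\Delta(F_0)$. Specifically, among all decompositions in which $F_1,\ldots,F_k$ remain forests (while $F_0$ is allowed to become an arbitrary subgraph), I would choose one whose non-increasing degree sequence of $F_0$ is lexicographically smallest; equivalently, one minimises a sufficiently convex potential $\sum_u g(\deg_{F_0}(u))$.

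Assume for contradiction that $\Delta(F_0) \geq d+1$ and fix $v$ with $\deg_{F_0}(v) \geq d+1$. Two exchange moves then constrain the structure. First, if some $vw \in F_0$ could be added to $F_i$ without creating a cycle, moving it there strictly improves the potential; hence for every $w \in N_{F_0}(v)$ and every $i \in \{1,\ldots,k\}$, $v$ and $w$ lie in the same $F_i$-component. Let $T_i \subseteq F_i$ be the minimal subtree containing $\{v\} \cup N_{F_0}(v)$: a Steiner tree with $d+2$ terminals, whose leaves are all terminals. Second, for any edge $ab$ on the $F_i$-path from $v$ to $w$, the simultaneous swap of $vw$ into $F_i$ and $ab$ into $F_0$ is a valid move, and it strictly improves the potential unless at least one of $a,b$ already has $F_0$-degree $\geq d$; hence $A := \{u : \deg_{F_0}(u) \geq d\}$ is a vertex cover of each $T_i$.

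To derive a contradiction, consider the subgraph $H = G[V_H]$, where $V_H = \{v\} \cup N_{F_0}(v) \cup \bigcup_i V(T_i)$. Setting $s_i = |V(T_i)| - (d+2) \geq 0$, the $d+1$ edges of $F_0$ at $v$ together with the $|V(T_i)|-1$ edges of each $T_i$ give
\[
\frac{|E(H)|}{|V(H)|-1} \;\geq\; \frac{(k+1)(d+1) + \sum_i s_i}{(d+1) + \sum_i s_i},
\]
and this ratio exceeds $k+\eps$ precisely when $\sum_i s_i < (1-\eps)(d+1)/(k+\eps-1)$, which would contradict $\arbf G \leq k+\eps$. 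The main technical obstacle is the upper bound on $\sum_i s_i$: it has to be extracted from the vertex-cover condition above, together either with deeper multi-swap arguments across distinct forests $F_i$ and $F_j$ (three-step exchanges push $V(T_i)$ into $\bigcap_j C_j(v)$ and further restrict the $T_i$'s), or with the global bound $|E(F_0)| \leq \eps(|V(G)|-1)$ that follows from the matroid union theorem applied to $\arbf G \leq k+\eps$. Optimising this bookkeeping yields the stated threshold $d = \lceil (k+1)(k-1+2\eps)/(1-\eps)\rceil$.
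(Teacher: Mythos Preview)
This theorem is not proved in the present paper; it is quoted from~\cite{MORZ:covering} as background, so there is no proof here against which to compare your proposal.

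On its own merits, your proposal is a plausible framework but not a proof: the decisive step is missing, and you say so yourself. After setting up the extremal decomposition and the two exchange moves, the entire argument rests on showing that the total number of non-terminal Steiner vertices $\sum_i s_i$ is small enough to force the density ratio above $k+\eps$. You flag this as ``the main technical obstacle'' and then offer two possible directions (deeper multi-swap arguments, or the global bound $\size{E(F_0)} \leq \eps(\size{V(G)}-1)$) without executing either one. The closing sentence ``Optimising this bookkeeping yields the stated threshold'' is an assertion, not an argument.

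Concretely, the vertex-cover observation you derive---that every edge of each $T_i$ meets some vertex of $F_0$-degree at least $d$---does not by itself bound $\size{V(T_i)}$. A tree can have arbitrarily many vertices while admitting a vertex cover of moderate size (take a star, or a spider), so you would additionally need to control $\size{A \cap V(T_i)}$. The global estimate $\size A \leq 2\eps(\size{V(G)}-1)/d$ does not localise to $V(T_i)$, and the ``three-step exchanges'' you allude to are not carried out. Without closing this gap one cannot conclude the claimed value of $d$, and in particular one cannot check that the precise constant $(k+1)(k-1+2\eps)/(1-\eps)$ actually emerges from your inequality $\sum_i s_i < (1-\eps)(d+1)/(k+\eps-1)$.
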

Theorem~\ref{t:weaker} provides a value of $d$ for any choice of
$\eps$, but it does not ensure a suitable value of $\eps$ for an
arbitrary $d$. In particular, for $k \geq 2$ it leaves open the
question whether there is $\eps = \eps(k)$ such that the edges of any
graph with fractional arboricity at most $k + \eps$ can be covered by
$k$ forests and a matching. In the present paper, we answer this
question in the affirmative:

\begin{theorem}\label{t:main}
  Let $k \geq 1$ be an integer and $G$ a graph with $\arbf G \leq k +
  \tfrac1{3k+2}$. Then $E(G)$ can be decomposed into $k$ forests and a
  matching.
\end{theorem}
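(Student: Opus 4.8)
The plan is to argue by contradiction, taking a counterexample $G$ that minimizes $|E(G)|$ and then $|V(G)|$. First I would record the routine reductions. We may assume $G$ is connected, since otherwise we decompose each component separately, and $\delta(G)\ge 2$: an isolated vertex can be deleted, and a pendant edge can be deleted, the rest decomposed by minimality, and the edge returned to any forest without creating a cycle. We may also assume $|E(G)| > k(|V(G)|-1)$, for otherwise $\arb G\le k$ by Theorem~\ref{t:arb} and no matching is needed. Rewriting the hypothesis as $(3k+2)|E(H)| \le (3k^2+2k+1)(|V(H)|-1)$ for every subgraph $H\sub G$, a crude count (the total vertex charge $\sum_v\deg(v)=2|E(G)|$) already shows that $G$ has a vertex of degree at most $2k$; but this alone is too weak, and the real work is a refined structural lemma.

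Second, the crux: I would prove that $G$ must contain one of a short list of \emph{reducible configurations}. Via discharging from the inequality $(3k+2)|E(H)|\le(3k^2+2k+1)(|V(H)|-1)$, with an initial charge of roughly $(3k+2)\deg(v)/2-(3k^2+2k+1)$ at each vertex, one forces the presence of either a vertex of degree $\le 2k$ whose neighbourhood is suitably ``spread out'', or a small subgraph joined to the rest of $G$ by few edges, or a pair of adjacent low-degree vertices. The exact list is dictated by which configurations can actually be reduced in the next step, and the constant $1/(3k+2)$ should be precisely the threshold at which such a list becomes simultaneously non-empty (by the count) and reducible (by the argument below).

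Third, I would reduce each configuration. The model case is a vertex $v$ of small degree $d$: delete $v$, obtain a decomposition $F_1,\dots,F_k,M$ of $G-v$ by minimality, and reinsert the $d$ edges at $v$. Since $v$ is new, an edge $vu$ may be placed into a forest $F_i$ unless another edge $vu'$ already placed in $F_i$ has $u'$ in the same component of $F_i$ as $u$ (or $u'=u$), and $vu$ may be placed in $M$ provided $u$ is $M$-unsaturated and no other edge at $v$ lies in $M$. To make the bookkeeping go through, I expect one must strengthen the induction hypothesis --- for instance by also producing a rooting of each forest (an orientation with out-degree $\le 1$ per forest) and tracking which vertices are $M$-saturated --- so that the number of components of $F_i$ meeting $N(v)$ can be controlled. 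The remaining configurations (a small attached subgraph, or adjacent low-degree vertices) are treated analogously: delete the configuration, apply minimality, and extend.

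Finally, I expect the main obstacle to be exactly this extension step. Unlike pure arboricity, where Nash--Williams / matroid union settles everything, the matching $M$ is not a matroid, so when several edges at $v$ all want to enter $M$ but their far endpoints are already $M$-saturated, they must instead be pushed into the forests, where they may be blocked by cycles. Balancing ``matching conflicts at the far endpoints'' against ``cycle conflicts inside the forests'' is where the factor $3$ in $3k+2$ should come from --- morally, a forest edge constrains two vertices while a matching edge constrains one --- and the inequality $(3k+2)|E(H)|\le(3k^2+2k+1)(|V(H)|-1)$ is tuned so that a degree-$(\le 2k)$ vertex can always be accommodated. Making the reducible-configuration list and the reinsertion argument match up exactly at the threshold $k+1/(3k+2)$ is the heart of the proof.
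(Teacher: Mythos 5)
Your approach---minimal counterexample, discharging, reducible configurations---is genuinely different from the paper's, which proves the theorem globally by combining matroid duality with Aharoni and Berger's topological extension of the matroid intersection theorem. In the paper, the key steps are: (a) reformulate the problem as finding a base of $\NN = (\union k \MM)^*$ that is a matching (Lemma~\ref{l:link}); (b) show that if $X$ is the complement of a flat of $\NN$, then $\eind G X$ has minimum degree at least $k+1$ (Lemma~\ref{l:mindeg}); (c) lower-bound the connectivity of the matching complex of any such $\eind G X$ via the 2-path domination number (Lemma~\ref{l:conn}); and (d) plug everything into Theorem~\ref{t:inters}. The threshold $1/(3k+2)$ falls directly out of the calculation in Lemma~\ref{l:conn}; there is no discharging and no case analysis of local configurations.

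The problem with your proposal is that it is not a proof but a plan, and the plan is missing precisely the parts that would be hard. You never identify the list of reducible configurations, never carry out the discharging to show one must occur, and never prove reducibility; indeed you explicitly say that ``making the reducible-configuration list and the reinsertion argument match up exactly at the threshold $k+1/(3k+2)$ is the heart of the proof.'' The one concrete step you do take---a vertex of degree at most $2k$ exists---is true but, as you note yourself, too weak on its own. The reinsertion step for such a vertex is genuinely problematic: if $d(v)=2k$ and each of the $k$ forests already connects all of $N(v)$ into a single component, then each $F_i$ can absorb only one edge of $v$, the matching can absorb at most one more, and $2k-1$ edges of $v$ remain unplaced for $k\geq 2$. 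Ruling out this bad situation would require strengthening the induction hypothesis (you gesture at rootings and tracking saturation), but whether such a strengthening is consistent with the threshold $1/(3k+2)$ is exactly the open question you leave unresolved. Until the configuration list, the discharging, and the reducibility proofs are supplied, this cannot be accepted as a proof of Theorem~\ref{t:main}.
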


Our proof is based on an extension of the matroid intersection
theorem~\cite{Edm:minimum} due to Aharoni and
Berger~\cite{AB:intersection}. The structure of the paper is as
follows. In Section~\ref{sec:matroid}, we recall the necessary notions
of matroid theory. Section~\ref{sec:complex} gives an overview of the
topological preliminaries. The pieces are assembled in
Section~\ref{sec:proof}, where we prove Theorem~\ref{t:main}.

%%%%%%%%%%%%%%%%%%%%%%%%%%%%%%%%%%%%%%%%%%%%%%%%%%%%%%%%%%%%%%%%%%%%%%

\section{Matroids}
\label{sec:matroid}

The purpose of this section to introduce the relevant terminology and
facts from matroid theory. For more details, the reader may consult
the book of Oxley~\cite{Oxl:matroid} or Part IV of
Schrijver~\cite{Sch:combinatorial}.

A \emph{matroid} is a pair $(E,\II)$, where $E$ is a finite set and
$\II$ is a nonempty collection of subsets of $E$ satisfying the
following axioms:
\begin{enumerate}[\quad(M1)]
\item if $A\sub B \sub E$ and $B\in \II$, then $A\in\II$, and
\item if $A,B \in \II$ and $\size A < \size B$, then for some $x\in
  B\sm A$, $A\cup x \in \II$. 
\end{enumerate}
(For brevity, we write $A\cup x$ in place of $A\cup\Setx x$, and $A\sm
x$ instead of $A \sm\Setx{x}$.)

Let $\MM = (E,\II)$ be a matroid. The sets in $\II$ are called
\emph{independent sets} of $\MM$ (the other subsets of $E$ being
\emph{dependent}), and $\MM$ is said to be a matroid \emph{on} $E$.

It is easy to prove from (M2) that all inclusionwise maximal subsets
of a set $X\sub E$ that are independent in $\MM$ have the same
cardinality. This cardinality is called the \emph{rank} of $X$ in
$\MM$ and denoted by $\rankx \MM X$. By definition, the rank of $\MM$
is $\rankx \MM E$. Any independent set of size $\rankx \MM E$ is a
\emph{base} of $\MM$.

The \emph{dual matroid} $\MM^*$ of $\MM$ is a matroid on $E$ whose
independent sets are all the subsets of $E$ that are disjoint from
some base of $\MM$. Thus, the bases of $\MM^*$ are precisely the
conplements of the bases of $\MM$. The rank function of the dual
matroid is given by the following lemma (see~\cite[Proposition
2.1.9]{Oxl:matroid}):

\begin{lemma}\label{l:dual}
  If $\MM$ is a matroid on $E$ and $X$ is a subset of $E$, then the
  rank of $X$ in the dual matroid $\MM^*$ is
  \begin{equation*}
    \rankx{\MM^*} X = \size X + \rankx\MM{E \sm X} - \rankx\MM E.
  \end{equation*}
\end{lemma}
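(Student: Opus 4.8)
The plan is to compute $\rankx{\MM^*}X$ straight from the definition of the dual matroid. By definition, a set $I\sub X$ is independent in $\MM^*$ precisely when $I$ is disjoint from some base $B$ of $\MM$, i.e. when $I\sub X\sm B$; for a fixed base $B$ the largest such $I$ is $X\sm B$ itself, of cardinality $\size X-\size{X\cap B}$. Hence
\begin{equation*}
  \rankx{\MM^*}X=\size X-\min_B\size{X\cap B},
\end{equation*}
the minimum being over all bases $B$ of $\MM$, and it remains to identify $\min_B\size{X\cap B}$ with $\rankx\MM E-\rankx\MM{E\sm X}$.

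First I would establish the inequality $\size{X\cap B}\ge\rankx\MM E-\rankx\MM{E\sm X}$ for every base $B$. This is immediate: $B\cap(E\sm X)$ is an independent set contained in $E\sm X$, so $\size{B\cap(E\sm X)}\le\rankx\MM{E\sm X}$, and since $\size B=\rankx\MM E$ one gets $\size{X\cap B}=\size B-\size{B\cap(E\sm X)}\ge\rankx\MM E-\rankx\MM{E\sm X}$.

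For the reverse direction I would exhibit a base attaining equality. Take an inclusionwise maximal independent subset $J$ of $E\sm X$, so $\size J=\rankx\MM{E\sm X}$, and extend $J$ to a base $B$ of $\MM$ — possible by repeatedly invoking (M2) against any base of larger size. The point to check is that $B\cap(E\sm X)=J$: were there an element $e\in(E\sm X)\sm J$ with $e\in B$, then $J\cup e$ would be an independent subset of $E\sm X$ properly containing $J$, contradicting maximality of $J$. Thus $\size{X\cap B}=\size B-\size J=\rankx\MM E-\rankx\MM{E\sm X}$, which together with the previous paragraph yields the value of the minimum and hence the lemma. As this is a classical identity I expect no real obstacle; the only delicate point is this last step, where the maximality of $J$ is exactly what prevents the extension $B$ from acquiring further elements outside $X$.
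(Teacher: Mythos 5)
Your proof is correct and complete. The paper gives no proof of this lemma, simply citing Oxley's Proposition~2.1.9; your argument is the standard one found there, reducing the dual rank to $\size X - \min_B\size{X\cap B}$ over bases $B$ of $\MM$ and then evaluating the minimum by extending a maximal independent subset of $E\sm X$ to a base.
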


Each graph $G$ has an associated matroid, the \emph{cycle matroid}
of $G$. This is a matroid on $E(G)$ and its independent sets are the
edge sets of forests in $G$. For $X\sub E(G)$, let $\eind G X$ be the
subgraph of $G$ induced by the edge set $X$ (that is, its vertices are
all the vertices incident with an edge of $X$, and its edge set is
$X$). The rank function of $X$ is easily interpreted in terms of
$\eind G X$:

\begin{lemma}\label{l:rank}
  Let $\MM$ be the cycle matroid of a graph $G$ and $X\sub
  E(G)$. If the subgraph $\eind G X$ has $n(X)$ vertices and $c(X)$
  components, then
  \begin{equation*}
    \rankx \MM X = n(X) - c(X).
  \end{equation*}
\end{lemma}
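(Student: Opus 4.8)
The plan is to identify the inclusionwise maximal independent subsets of $X$ with the spanning forests of the subgraph $\eind G X$, and then to count the edges of one such forest. By the remark following axiom (M2), all inclusionwise maximal independent subsets of $X$ have the same cardinality, namely $\rankx\MM X$, so it suffices to exhibit one maximal independent $F\sub X$ with $\size F = n(X)-c(X)$.

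Before the main argument I would dispose of loops. A loop is a cycle of length one and hence dependent in $\MM$, so deleting the loops of $G$ from $X$ leaves $\rankx\MM X$ unchanged; it also leaves $n(X)-c(X)$ unchanged, because a vertex that in $X$ is incident only with loops forms a one-vertex component of $\eind G X$ and thus contributes $1$ to both $n(X)$ and $c(X)$, so its removal does not affect the difference. Hence I may assume that no edge of $X$ is a loop, in which case every vertex of $\eind G X$ has degree at least $1$.

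Next I would fix $F\sub X$ that is inclusionwise maximal subject to $\eind G F$ being a forest; equivalently, $F$ is a maximal independent subset of $X$. The key claim is that $\eind G F$ and $\eind G X$ have the same vertex set and induce the same partition of that set into components. For the vertex set: if some $v\in V(\eind G X)$ were not incident with any edge of $F$, I would choose an edge $e\in X$ incident with $v$; as $e$ is not a loop, $v$ has degree at most $1$ in $\eind G {F\cup\Setx e}$, so that graph is still a forest, contradicting maximality of $F$. For the components: if two vertices lay in one component of $\eind G X$ but in distinct components of $\eind G F$, then some edge $e$ along a connecting path in $\eind G X$ would join two different components of $\eind G F$, and again $F\cup\Setx e$ would be a forest. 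Granting the claim, $F$ is a forest with $n(X)$ vertices and $c(X)$ components; since each component is a tree and a tree on $m$ vertices has $m-1$ edges, summing over the $c(X)$ components yields $\size F = n(X)-c(X)$, which is the desired value of $\rankx\MM X$.

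The only step with real content is the claim that a maximal forest inside $X$ already meets every vertex of $\eind G X$ and is connected within every component of $\eind G X$; this is where maximality is genuinely used, together with the elementary facts that a cycle has no vertex of degree $1$ and that adding an edge between two components of a forest keeps it a forest. Everything else is bookkeeping, and as an alternative the whole lemma can be proved by induction on $\size X$, adding one edge at a time and checking in each of the few cases (new loop, edge within a component, edge joining two components, edge touching a new vertex) that both sides of the identity change by the same amount.
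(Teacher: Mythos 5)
The paper states Lemma~\ref{l:rank} without proof, treating it as a standard fact about the cycle matroid, so there is no argument in the text to compare against. Your proof is correct and is the standard spanning-forest argument: the loop-elimination step is handled carefully (a loop is dependent, and a vertex incident only with loops contributes $1$ to both $n(X)$ and $c(X)$, so the difference is preserved), the maximality argument correctly shows that a maximal forest $F\sub X$ spans every vertex of $\eind G X$ and is connected within each component of $\eind G X$, and the final count $\size F = n(X)-c(X)$ by summing $m_i - 1$ over the $c(X)$ tree components is exactly what is needed. This is the proof one would expect to find in a matroid theory text, and it supplies the justification the paper leaves to the reader.
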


Theorem~\ref{t:arb} has a natural proof using matroid theory, based on
the following important result of Nash-Williams~\cite{NW:application}
(see also~\cite[Proposition~12.3.1]{Oxl:matroid}):

\begin{theorem}[Matroid union theorem]\label{t:union}
  Let $\MM$ and $\NN$ be matroids on $E$. Let $\II$ be the collection
  of all sets $I\cup J$, where $I$ is an independent set of $\MM$ and
  $J$ is an independent set of $\NN$. Then $(E,\II)$ is a matroid
  and the rank of a set $X\sub E$ in this matroid equals
  \begin{equation*}
    \min_{T\sub X} \Bigl(\size{X\sm T} + \rankx \MM T + \rankx \NN T\Bigr).
  \end{equation*}
\end{theorem}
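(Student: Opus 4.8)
\medskip
\noindent\emph{Proof idea.}
The plan is to exhibit the displayed expression directly as a matroid rank function and then to check that the corresponding independent sets are exactly the sets $I\cup J$ appearing in the statement; both assertions follow simultaneously. For $X\sub E$ put
\[
  \rho(X)=\min_{T\sub X}\bigl(\size{X\sm T}+\rankx\MM T+\rankx\NN T\bigr).
\]
I would first verify that $\rho$ satisfies the three conditions characterising a matroid rank function: (R1) $0\le\rho(X)\le\size X$, (R2) monotonicity, and (R3) submodularity. This makes $\II_\rho:=\Set{X\sub E}{\rho(X)=\size X}$ the family of independent sets of a matroid on $E$ with rank function $\rho$, and it then only remains to prove $\II_\rho=\II$.

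Conditions (R1) and (R2) are routine: for (R1) take $T=\emptyset$, and for (R2), if $X\sub Y$ and $T$ is a minimiser for $\rho(Y)$, then $T\cap X$ is a test set for $\rho(X)$ of value at most $\rho(Y)$. The substance is (R3). Given $X$ and $Y$ with respective minimisers $S\sub X$ and $T\sub Y$, I would estimate $\rho(X\cup Y)$ using the test set $S\cup T$ and $\rho(X\cap Y)$ using the test set $S\cap T$. The two rank contributions are then absorbed by the submodularity of the rank functions of $\MM$ and of $\NN$, and the two cardinality contributions by the elementary inequality
\[
  \size{(X\cup Y)\sm(S\cup T)}+\size{(X\cap Y)\sm(S\cap T)}\le\size{X\sm S}+\size{Y\sm T},
\]
which is checked by following the contribution of a single element of $E$ according to its membership in each of $X$, $Y$, $S$, $T$. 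Summing the three estimates yields $\rho(X\cup Y)+\rho(X\cap Y)\le\rho(X)+\rho(Y)$.

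Unwinding the definition, $X\in\II_\rho$ exactly when $\rankx\MM T+\rankx\NN T\ge\size T$ for every $T\sub X$ (subtract $\size{X\sm T}=\size X-\size T$ from the defining inequality). If $X=I\cup J$ with $I\in\II(\MM)$ and $J\in\II(\NN)$, then for $T\sub X$ we have $\rankx\MM T\ge\size{T\cap I}$ and $\rankx\NN T\ge\size{T\cap J}\ge\size{T\sm I}$, so their sum is at least $\size T$; hence $\II\sub\II_\rho$.

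The reverse inclusion is the heart of the proof, and I expect it to be the main obstacle. One is given a set $X$ with $\rankx\MM T+\rankx\NN T\ge\size T$ for all $T\sub X$ and must actually produce a partition $X=I\cup J$ with $I$ independent in $\MM$ and $J$ independent in $\NN$. Replacing $\MM$ and $\NN$ by their restrictions to $X$, one may assume $X=E$; then such a partition exists if and only if $\MM$ and the dual matroid $\NN^*$ possess a common independent set of size $\rankx{\NN^*}E=\size E-\rankx\NN E$ (take $I$ to be such a set, which is necessarily a base of $\NN^*$, and $J=E\sm I$, which is then a base of $\NN$). I would obtain this common independent set from the matroid intersection theorem: its maximum size is $\min_{T\sub E}\bigl(\rankx\MM T+\rankx{\NN^*}{E\sm T}\bigr)$, and substituting $\rankx{\NN^*}{E\sm T}=\size{E\sm T}+\rankx\NN T-\rankx\NN E$ from Lemma~\ref{l:dual} turns the requirement that this maximum be at least $\size E-\rankx\NN E$ into exactly the hypothesis $\rankx\MM T+\rankx\NN T\ge\size T$. (A self-contained alternative, avoiding matroid intersection, is the usual augmenting-path argument in the exchange graph of $\MM$ and $\NN$: from a maximum pair of disjoint independent sets, a shortest alternating path out of an uncovered element either enlarges the pair or exposes a set $T$ violating the hypothesis; this works but requires more careful bookkeeping.) Once $\II_\rho=\II$ is established, $(E,\II)$ is a matroid whose rank function is $\rho$, as asserted.
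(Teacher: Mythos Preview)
The paper does not actually prove Theorem~\ref{t:union}: it is quoted as a classical result of Nash-Williams, with a pointer to Oxley's textbook, and is used only as a tool. So there is no proof in the paper to compare your proposal against.

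That said, your argument is correct and follows one of the standard routes to the theorem. The verification that $\rho$ is a rank function is clean; in fact your cardinality ``inequality'' is an equality, since with $S\sub X$ and $T\sub Y$ both sides equal $\size X+\size Y-\size S-\size T$ by inclusion--exclusion. The reduction of the hard inclusion $\II_\rho\sub\II$ to Theorem~\ref{t:m-inters} via the dual $\NN^*$ is also standard and works exactly as you describe: the dual-rank formula of Lemma~\ref{l:dual} turns the intersection condition into precisely the hypothesis $\rankx\MM T+\rankx\NN T\ge\size T$. The only caveat is structural rather than mathematical: in this paper both Theorem~\ref{t:union} and Theorem~\ref{t:m-inters} are imported without proof, so deriving one from the other does not make the exposition more self-contained. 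If a from-scratch argument were wanted, your parenthetical augmenting-path alternative is indeed the usual way to proceed.
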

The matroid from Theorem~\ref{t:union} is called the \emph{union} of
$\MM$ and $\NN$ and is denoted by $\MM\vee\NN$. 

Let $\union k \MM$ be the union of $k$ copies of $\MM$. The Matroid
union theorem implies that the rank function of this matroid is as
follows:
\begin{corollary}\label{cor:mk}
  The rank of a set $X\sub E$ in $\union k \MM$ is
  \begin{equation*}
    \rankx{\union k \MM} X = \min_{T\sub X} \Bigl( \size{X \sm T} + 
    k \cdot \rankx\MM T \Bigr).
  \end{equation*}
\end{corollary}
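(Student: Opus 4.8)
The plan is to prove the formula by induction on $k$, using Theorem~\ref{t:union} in the inductive step. For $k = 1$ we have $\union 1\MM = \MM$, and we must check that $\rankx\MM X = \min_{T\sub X}\bigl(\size{X\sm T} + \rankx\MM T\bigr)$. Choosing $T = X$ shows the minimum is at most $\rankx\MM X$. Conversely, for any $T\sub X$, an independent subset $I$ of $X$ meets $T$ in an independent set and meets $X\sm T$ in a set of size at most $\size{X\sm T}$, so $\size I\leq \rankx\MM T + \size{X\sm T}$; taking $I$ of maximum size gives $\rankx\MM X\leq \rankx\MM T + \size{X\sm T}$. Hence the minimum equals $\rankx\MM X$, as required.

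For the inductive step, let $k\geq 2$ and assume the formula holds for $k-1$. Since matroid union is associative, $\union k\MM = \union{k-1}\MM\vee\MM$, so Theorem~\ref{t:union} yields
\begin{equation*}
  \rankx{\union k\MM}X = \min_{T\sub X}\Bigl(\size{X\sm T} + \rankx{\union{k-1}\MM}T + \rankx\MM T\Bigr).
\end{equation*}
Substituting the induction hypothesis for $\rankx{\union{k-1}\MM}T$, the right-hand side becomes
\begin{equation*}
  \min_{T\sub X}\,\min_{S\sub T}\Bigl(\size{X\sm T} + \size{T\sm S} + (k-1)\rankx\MM S + \rankx\MM T\Bigr),
\end{equation*}
and it remains to identify this quantity with $\min_{T\sub X}\bigl(\size{X\sm T} + k\rankx\MM T\bigr)$.

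One inequality is immediate: taking $S = T$ in the inner minimum produces the term $\size{X\sm T} + k\rankx\MM T$. For the reverse inequality, fix $T\sub X$ and $S\sub T$. Since $S\sub T\sub X$ we have $\size{X\sm T} + \size{T\sm S} = \size{X\sm S}$, and monotonicity of the rank function gives $\rankx\MM T\geq \rankx\MM S$; therefore
\begin{equation*}
  \size{X\sm T} + \size{T\sm S} + (k-1)\rankx\MM S + \rankx\MM T \geq \size{X\sm S} + k\rankx\MM S,
\end{equation*}
and the right-hand side is one of the terms over which $\min_{T'\sub X}\bigl(\size{X\sm T'} + k\rankx\MM{T'}\bigr)$ is taken. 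This finishes the induction. There is no genuine obstacle here; the only point requiring care is the bookkeeping with the nested minima, which reduces to the two elementary observations above.
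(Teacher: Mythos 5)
Your proof is correct. The paper states this as an immediate corollary of Theorem~\ref{t:union} without spelling out the details; your induction on $k$, using associativity of matroid union and the two simple inequalities (monotonicity of rank, and $\size{X\sm T}+\size{T\sm S}=\size{X\sm S}$ for $S\sub T\sub X$) to collapse the nested minima, is precisely the standard argument the paper leaves implicit.
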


Closely related to Theorem~\ref{t:union} is the following result of
Edmonds~\cite{Edm:minimum} (see also~\cite[Theorem 12.3.15]{Oxl:matroid}):

\begin{theorem}[Matroid intersection theorem]\label{t:m-inters}
  Let $\MM$ and $\NN$ be matroids on $E$. The maximum size of a subset
  of $E$ that is independent in both $\MM$ and $\NN$ equals
  \begin{equation*}
    \min_{X\sub E} \Bigl(\rankx\MM X + \rankx\NN {E\sm X}\Bigr).
  \end{equation*}
\end{theorem}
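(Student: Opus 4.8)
The plan is to deduce the theorem from two facts already at our disposal: the Matroid Union Theorem (Theorem~\ref{t:union}) and the dual rank formula (Lemma~\ref{l:dual}). Write $\nu$ for the maximum size of a set that is independent in both $\MM$ and $\NN$. One direction is immediate: if $I$ is such a common independent set and $X\sub E$ is arbitrary, then $I\cap X$ is independent in $\MM$ and $I\cap(E\sm X)$ is independent in $\NN$, so
\begin{equation*}
  \size I = \size{I\cap X} + \size{I\cap(E\sm X)} \le \rankx\MM X + \rankx\NN{E\sm X},
\end{equation*}
and taking the minimum over $X$ gives $\nu \le \min_{X\sub E}\bigl(\rankx\MM X + \rankx\NN{E\sm X}\bigr)$. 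The content of the theorem is the reverse inequality, i.e.\ the existence of a common independent set and a subset $X$ that are simultaneously tight.

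The engine of the argument is to compute $\rankx{\MM\vee\NN^*}{E}$ in two ways. On one hand, Theorem~\ref{t:union} with $X=E$ gives $\rankx{\MM\vee\NN^*}{E} = \min_{T\sub E}\bigl(\size{E\sm T} + \rankx\MM T + \rankx{\NN^*}{T}\bigr)$; substituting $\rankx{\NN^*}{T} = \size T + \rankx\NN{E\sm T} - \rankx\NN E$ from Lemma~\ref{l:dual} and using $\size{E\sm T}+\size T = \size E$, the $\pm\size T$ terms cancel and this becomes
\begin{equation*}
  \rankx{\MM\vee\NN^*}{E} = \size E - \rankx\NN E + \min_{T\sub E}\bigl(\rankx\MM T + \rankx\NN{E\sm T}\bigr).
\end{equation*}
On the other hand, I claim $\rankx{\MM\vee\NN^*}{E} = \size E - \rankx\NN E + \nu$. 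For ``$\ge$'', take a common independent set $I$ with $\size I = \nu$, extend it via (M2) to a base $B$ of $\NN$; then $E\sm B\in\II(\NN^*)$ and is disjoint from $I$, so $I\cup(E\sm B)$ is a union of an $\MM$-independent set and an $\NN^*$-independent set, of size $\nu + \size E - \rankx\NN E$. For ``$\le$'', given $I\in\II(\MM)$ and $J\in\II(\NN^*)$, extend $J$ to a base of $\NN^*$, which by the description of the bases of $\NN^*$ has the form $E\sm B$ for a base $B$ of $\NN$; then $I\cup J \sub (I\cap B)\cup(E\sm B)$, a disjoint union of size $\size{I\cap B} + \size E - \rankx\NN E$, and $I\cap B$ is independent in both $\MM$ and $\NN$ (being a subset of each of $I$ and $B$), so $\size{I\cap B}\le\nu$. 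Equating the two expressions for $\rankx{\MM\vee\NN^*}{E}$ and cancelling $\size E - \rankx\NN E$ yields exactly $\nu = \min_{X\sub E}\bigl(\rankx\MM X + \rankx\NN{E\sm X}\bigr)$.

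The step I expect to require the most care is the two-sided identity $\rankx{\MM\vee\NN^*}{E} = \size E - \rankx\NN E + \nu$: one has to check that the maximum in the union rank may be normalized so that the $\NN^*$-part is a base of $\NN^*$ and the intersection $I\cap B$ is genuinely a common independent set, which is precisely where axiom (M2) and the complement-of-bases description of $\NN^*$ are used. If one prefers a self-contained argument avoiding Theorem~\ref{t:union}, the classical alternative is the augmenting-path proof: for a common independent set $I$, form the exchange digraph on vertex set $E$ with an arc $y\to x$ whenever $y\in I$, $x\notin I$ and $(I\sm y)\cup x\in\II(\MM)$, and an arc $x\to y$ whenever $(I\sm y)\cup x\in\II(\NN)$; then show that either a shortest directed path from $\Set{x\notin I}{I\cup x\in\II(\MM)}$ to $\Set{x\notin I}{I\cup x\in\II(\NN)}$ can be used to replace $I$ by a larger common independent set, or, failing that, the set $Z$ of vertices from which $\Set{x\notin I}{I\cup x\in\II(\NN)}$ is reachable satisfies $\rankx\NN Z = \size{I\cap Z}$ and $\rankx\MM{E\sm Z} = \size{I\sm Z}$, so that $X=E\sm Z$ is the tight set. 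There the delicate ingredient is the ``no-shortcut'' lemma about shortest augmenting paths; given that Theorem~\ref{t:union} is already available, I would take the reduction route above.
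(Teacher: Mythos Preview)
Your proof is correct. Note, however, that the paper does not actually give a proof of Theorem~\ref{t:m-inters}: it is stated as a classical result of Edmonds, with a reference to Oxley's book. The only comment the paper makes in this direction is the remark after Theorem~\ref{t:inters} that, since the independence complex of a matroid $\MM$ has $\eta$ equal to $\rankx\MM E$ (barring coloops), the Aharoni--Berger theorem implies the nontrivial direction of Theorem~\ref{t:m-inters}.

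Your argument takes a different and self-contained route: you derive matroid intersection from the Matroid Union Theorem (Theorem~\ref{t:union}) together with the dual rank formula (Lemma~\ref{l:dual}) by computing $\rankx{\MM\vee\NN^*}{E}$ in two ways. This is a standard and clean reduction, and every step checks out --- in particular the two inequalities establishing $\rankx{\MM\vee\NN^*}{E} = \size E - \rankx\NN E + \nu$ are handled carefully. The alternative augmenting-path proof you sketch at the end is also a valid approach, though as you say the union-theorem reduction is more economical given what is already available in the paper.
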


A \emph{circuit} of $\MM$ is any inclusionwise minimal dependent
subset of $E$. A set $X\sub E$ is a \emph{flat} of $\MM$ if for every
$x\in E\sm X$, $\rankx\MM{X\cup x} = \rankx\MM X + 1$. We will need the
following lemma which relates circuit and flats
(see~\cite[Proposition~1.4.10]{Oxl:matroid} for a proof):

\begin{lemma}\label{l:circuit}
  If $X$ is a flat of $\MM$ and $e\in E$ is contained in a circuit $C$
  such that $C \sub X\cup e$, then $e\in X$.
\end{lemma}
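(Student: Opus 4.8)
The plan is to argue by contradiction, working directly from the axioms (M1), (M2) and from the definitions of rank, circuit and flat given above (no submodularity of the rank function is needed). Assume that $e\notin X$; I will contradict the hypothesis that $X$ is a flat.

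First I would record two elementary facts about the circuit $C$. Since $\II$ is nonempty, (M1) gives $\emptyset\in\II$, so $C\neq\emptyset$; and by inclusionwise minimality of $C$ among dependent sets, $C\sm e$ is independent. Moreover $C\sm e\sub X$: indeed $C\sm e\sub C\sub X\cup e$ and $e\notin C\sm e$, hence $C\sm e\sub(X\cup e)\sm e\sub X$.

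Now extend $C\sm e$ to an inclusionwise maximal independent subset $B$ of $X$. Then $\rankx\MM X=\size B$ by definition, and $e\notin B$ since $B\sub X$ while $e\notin X$. The key claim is that $B$ is also a maximal independent subset of $X\cup e$. The elements of $(X\cup e)\sm B$ are $e$ together with the elements of $X\sm B$. No element of $X\sm B$ can be added to $B$ keeping independence, by maximality of $B$ in $X$. And $B\cup e$ is dependent: otherwise $C=(C\sm e)\cup e\sub B\cup e$ would be independent by (M1), contradicting that $C$ is a circuit. Hence $B$ is a maximal independent subset of $X\cup e$, so that $\rankx\MM{X\cup e}=\size B=\rankx\MM X$.

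This is the desired contradiction: since $e\in E\sm X$ and $X$ is a flat, the definition of flat forces $\rankx\MM{X\cup e}=\rankx\MM X+1$. Therefore $e\in X$, as claimed. I do not anticipate a genuine obstacle here; the only point that needs a little care is the verification that enlarging $B$ inside $X\cup e$ could only be attempted by adjoining $e$, which is precisely where the circuit hypothesis enters (via (M1)). One could alternatively dispatch the lemma in one line by applying submodularity of the matroid rank function to $X$ and $C$, but since submodularity has not been stated above, the route through (M2) seems preferable.
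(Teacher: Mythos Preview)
Your argument is correct. The paper does not supply its own proof of this lemma; it merely refers the reader to Oxley's textbook (Proposition~1.4.10), and your proof---extend the independent set $C\sm e\sub X$ to a maximal independent subset $B$ of $X$, note that $B\cup e$ contains the circuit $C$ and is therefore dependent, and conclude $\rankx\MM{X\cup e}=\size B=\rankx\MM X$, contradicting the flat hypothesis when $e\notin X$---is exactly the standard argument one finds there.
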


%%%%%%%%%%%%%%%%%%%%%%%%%%%%%%%%%%%%%%%%%%%%%%%%%%%%%%%%%%%%%%%%%%%%%%

\section{Complexes}
\label{sec:complex}

In this section, we review the topological machinery needed in our
proof. A more complete account can be found in Section 2
of~\cite{AB:intersection}. A standard reference on topological methods
in combinatorics is Bj\"{o}rner~\cite{Bjo:topological}.

A \emph{simplicial complex} (or just \emph{complex}) on a finite set
$E$ is any nonempty collection $\CC$ of subsets of $E$ such that if
$A\sub B \in \CC$, then $A \in \CC$. The subsets belonging to $\CC$
are called the \emph{faces} (or \emph{simplices}) of $\CC$.

Any complex $\CC$ has an associated geometric realization $\poly\CC$
called the \emph{polyhedron} of $\CC$. This is a topological space
obtained as follows. To each $e\in E$ contained in some face of $\CC$,
assign a vector $v_e$ in $\RR^{\size E}$ in such a way that all the
vectors $v_e$ are linearly independent. Every face $A$ of $\CC$ then
has an associated geometric simplex $\sigma_A$, namely the convex hull
of the set $\Set{v_e}{e\in A}$. The polyhedron of $\CC$ is obtained as
the union of all the simplices $\sigma_A$, where $A$ ranges over
$\CC$.

Next, we recall the notion of connectivity of complexes and
topological spaces in general. For $d\geq 0$, a topological space $X$
is \emph{$d$-connected} if every continuous mapping $f$ from the
$d$-dimensional sphere $S^d$ to $X$ can be extended to the
$(d+1)$-dimensional closed ball $B^{d+1}$ in a continuous way. Every
nonempty space is considered to be ($-1$)-connected. The
\emph{connectivity} of a nonempty space $X$ is the largest integer $k$
such that $X$ is $d$-connected for all $d$, where $-1 \leq d \leq
k$. (If $X$ is $d$-connected for all integers $d \geq -1$, then its
connectivity is infinite.) The property of being 0-connected is
equivalent to the usual arcwise connectedness of $X$. For higher $d$,
$d$-connected spaces can be intuitively thought of as those which have
no `holes' of dimension less than $d$.

The connectivity of a complex $\CC$ is defined as the connectivity of
its polyhedron $\poly\CC$. For technical reasons, it is useful to work
with a slight modification of this parameter, denoted by $\conn\CC$
and defined as the connectivity of $\CC$ plus 2.

Matroids can be viewed as complexes of a special type: if $\MM$ is a
matroid on a set $E$, then the independent sets of $\MM$ form a
complex on $E$. While most properties of matroids do not carry over to
the more general world of complexes, one important matroid-theoretical
result that has a partial extension to complexes is
Theorem~\ref{t:m-inters}. This extension is due to Aharoni and
Berger~\cite[Theorem 4.5]{AB:intersection} and we will use it in the
following formulation:

\begin{theorem}\label{t:inters}
  Let $\NN$ be a matroid on $E$ and let $\CC$ be a complex whose
  vertex set is also $E$. If 
  \begin{equation}\label{eq:inters}
    \conn{\subc\CC X} \geq \rankx\NN E - \rankx\NN{E\sm X}
  \end{equation}
  for every $X\sub E$ which is the complement of a flat of $\NN$, then
  $\NN$ has a base which is a face of $\CC$.
\end{theorem}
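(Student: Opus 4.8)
The plan is to prove Theorem~\ref{t:inters} by induction on $\size E$, following Aharoni and Berger~\cite{AB:intersection}: the inductive step is driven by a topological version of Hall's theorem (a system-of-representatives statement for families of complexes), and essentially all of the work lies in deriving its hypotheses from the flat-indexed connectivity bounds~\eqref{eq:inters}. As a guide to the shape of~\eqref{eq:inters}, note the model case where $\CC$ is the independence complex of a matroid $\MM$: on every restriction such a complex is $(\rankx\MM X-2)$-connected, so $\conn{\subc\CC X}\geq\rankx\MM X$, and~\eqref{eq:inters} then holds whenever $\rankx\MM X+\rankx\NN{E\sm X}\geq\rankx\NN E$, that is, exactly when Theorem~\ref{t:m-inters} provides a common independent set of size $\rankx\NN E$; Theorem~\ref{t:inters} is the imitation of this with $\eta$ in place of matroid rank.

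Two routine reductions set up the induction. If $\size E=0$, then $\emptyset$ is a base of $\NN$ and a face of $\CC$. If $\NN$ has a loop $e$, then $e$ lies in no base and in every flat of $\NN$, so the family $\Set{E\sm F}{F\text{ a flat of }\NN}$ and all the ranks in~\eqref{eq:inters} are unchanged on passing to $\NN\sm e$; hence $(\NN\sm e,\subc\CC{E\sm e})$ satisfies~\eqref{eq:inters}, and a base of $\NN\sm e$ obtained by induction is a base of $\NN$ and a face of $\subc\CC{E\sm e}\sub\CC$. So assume $\NN$ is loopless; then every vertex of $\CC$ is a non-loop of $\NN$, and since the instance of~\eqref{eq:inters} at $X=E=E\sm\emptyset$ reads $\conn\CC\geq\rankx\NN E$, the complex $\CC$ has a vertex as soon as $\rankx\NN E\geq1$.

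For the inductive step I would pick a vertex $e$ of $\CC$, pass to the contraction $\NN'=\NN/e$ and the link $\CC'=\Set{\sigma\sub E\sm e}{\sigma\cup e\in\CC}$, apply the induction hypothesis to get a base $B'$ of $\NN'$ that is a face of $\CC'$, and return $B'\cup e$, which is a base of $\NN$ and a face of $\CC$. One must choose $e$ so that $(\NN',\CC')$ again satisfies~\eqref{eq:inters}. Since $F\sm e$ is a flat of $\NN'$ precisely when $F$ is a flat of $\NN$ with $e\in F$, and then $\rankx{\NN'}{F\sm e}=\rankx\NN F-1$, the instances to be verified are $\conn{\subc{\CC'}{E\sm F}}\geq\rankx\NN E-\rankx\NN F$ for such $F$ --- the same bound~\eqref{eq:inters} already supplies for $\subc\CC{E\sm F}$. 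The tool for transferring it to the link is the gluing estimate $\conn{A\cap B}\geq\min\bigl(\conn A,\conn{A\cup B}-1\bigr)$ valid for a contractible subcomplex $B$ (from Mayer--Vietoris and van~Kampen; see~\cite{Bjo:topological}), applied with $A=\subc\CC{E\sm F}$ and $B$ the closed star of $e$ in $\subc\CC{(E\sm F)\cup e}$, so that $A\cup B=\subc\CC{(E\sm F)\cup e}$ and $A\cap B=\subc{\CC'}{E\sm F}$. When $F\sm e$ is also a flat of $\NN$ this closes at once, since then $(E\sm F)\cup e=E\sm(F\sm e)$ is the complement of a flat of rank $\rankx\NN F-1$ and~\eqref{eq:inters} gives $\conn{A\cup B}\geq\rankx\NN E-\rankx\NN F+1$.

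The main obstacle is the opposite case: a non-spanning flat $F\ni e$ of $\NN$ for which $e$ lies on a circuit contained in $F$ --- equivalently $F\sm e$ is not a flat, and Lemma~\ref{l:circuit} is the right device for keeping track of when this happens --- since then $(E\sm F)\cup e$ is not a complement of a flat and $\conn{A\cup B}$ is not bounded directly by~\eqref{eq:inters}. One cannot in general dodge these flats by the choice of $e$ (already for a rank-$2$ matroid with two parallel pairs every non-loop lies on such a flat), so the vertex-by-vertex view must give way to a global one: this is where the topological Hall-type theorem enters, its hypotheses being connectivity lower bounds on unions within a suitably built family of subcomplexes of $\CC$, which one verifies --- arguing by contradiction and organizing the offending flats through the flat lattice of $\NN$ --- from the full collection of instances~\eqref{eq:inters}. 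Establishing this dictionary, between the flat-indexed bounds of~\eqref{eq:inters} and the union-indexed bounds the Hall-type theorem demands, is the heart of the proof; once it is available, the induction closes and Theorem~\ref{t:inters} follows.
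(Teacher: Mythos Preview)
The paper does not prove Theorem~\ref{t:inters}; it is quoted from Aharoni and Berger~\cite[Theorem~4.5]{AB:intersection} as a black box and used in Section~\ref{sec:proof}. There is therefore no proof in the paper to compare your proposal against.

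As a reconstruction of the Aharoni--Berger argument, your write-up is an outline rather than a proof, and you say so yourself: you set up an induction on $\size E$, describe a contraction-and-link step, correctly isolate the obstruction (flats $F\ni e$ with $F\sm e$ not a flat, where~\eqref{eq:inters} gives no direct control of $\conn{\subc\CC{(E\sm F)\cup e}}$), and then defer to an unstated ``topological Hall-type theorem'' together with an unspecified ``dictionary'' between the flat-indexed bounds~\eqref{eq:inters} and the union-indexed hypotheses that theorem needs. That dictionary is, as you note, the heart of the matter, and it is not supplied. Moreover, the narrative pivots mid-stream: the single-vertex contraction scheme is proposed, shown to fail in the bad case, and then replaced by a ``global'' approach whose interaction with the original inductive frame is left unclear. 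In the actual Aharoni--Berger proof, the global colorful-Hall step is the primary engine (applied to a partition of $E$ coming from the matroid structure), not a patch on a failed local step; if you intend to follow their route, the induction and the Hall-type input should be organized around that from the start.
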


It can be shown~\cite{BKL:homotopy} that if $\CC$ is the complex of
independent sets of a matroid $\MM$, then $\conn\CC$ equals the rank
of $\MM$ (unless the dual of $\MM$ contains a loop, in which case
$\conn\CC$ is infinite). From this, one can easily derive that
Theorem~\ref{t:inters} implies the nontrivial direction of
Theorem~\ref{t:m-inters}.

Another class of complexes that is relevant in this paper is that of
independence complexes. The \emph{independence complex} $\II(H)$ of a
graph $H$ is a complex on $V(H)$ whose faces are the independent sets
of $H$ (that is, sets $I$ such that the induced subgraph of $H$ on $I$
has no edges). When $H$ is the line graph of a graph $G$, the
independent sets of $H$ are the matchings of $G$ and this construction
produces the \emph{matching complex} of $G$.

Since the connectivity of a complex is in general difficult to
establish, it is very useful that there are several results relating
the connectivity of an independence complex $\II(H)$ to the properties
of the graph $H$. Typically, these properties concern some variant of
the notion of domination in $H$ (a useful overview is given
in~\cite[Section 2]{ABZ:independent}). In our case, the bound involves
the edge-domination number which we recall next.

A set $D$ of edges of the graph $H$ is said to \emph{dominate} $H$ if
for every vertex $v$ of $H$, $v$ or at least one of its neighbors is
incident with an edge of $D$. The least cardinality of a set of edges
dominating $H$ is the \emph{edge-domination number} $\edgedom H$ of
$H$. (If $H$ contains an isolated vertex, then there is no dominating
set of edges and $\edgedom H$ is defined to be infinite.) The
following result is implicitly proved in several papers on
independence complexes (for references, see~\cite{ABZ:independent}):

\begin{theorem}\label{t:edgedom}
  If $H$ is a graph, then
  \begin{equation*}
    \conn{\II(H)} \geq \edgedom H.
  \end{equation*}
\end{theorem}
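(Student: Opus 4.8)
The plan is to prove the inequality by induction on $|V(H)|$, packaging all of the topology into a single gluing lemma and isolating the combinatorial content in an auxiliary parameter. First the degenerate cases. If $V(H)=\emptyset$, then $\poly{\II(H)}$ is empty, so (with the convention that the empty polyhedron has connectivity $-2$) $\conn{\II(H)}=0=\edgedom H$. If $H$ has an isolated vertex $v$, then $\II(H)$ equals the star of $v$, which is a cone and hence contractible, so $\conn{\II(H)}=\infty=\edgedom H$ and there is nothing to prove. Thus we may assume $H$ has no isolated vertex.

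The topological engine is the following. Fix a vertex $v$. Every face of $\II(H)$ containing $v$ has the form $v\cup A$ with $A\in\II(H-N[v])$, so these faces and their subfaces form the star of $v$, which is contractible; the faces avoiding $v$ form the subcomplex $\II(H-v)$; and the intersection of the star of $v$ with $\II(H-v)$ is the link of $v$, which equals $\II(H-N[v])$. Since $\II(H)$ is the union of these two subcomplexes, the standard Mayer--Vietoris/van Kampen gluing lemma — valid for the shifted parameter $\eta$, which adds $2$ to ordinary connectivity, and which absorbs the case of disjoint pieces under the above convention — yields, for every vertex $v$ of $H$,
\[
  \conn{\II(H)}\ \ge\ \min\bigl\{\,\conn{\II(H-v)}\,,\ \conn{\II(H-N[v])}+1\,\bigr\}.
\]
This is the recursion underlying the domination bounds for independence complexes surveyed in \cite{ABZ:independent}.

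It remains to feed $\edgedom H$ into this recursion. Define a combinatorial parameter $\psi$ by the same recursion: $\psi(H)=\infty$ if $H$ has an isolated vertex, $\psi(H)=0$ if $V(H)=\emptyset$, and otherwise $\psi(H)=\max_{v\in V(H)}\min\{\psi(H-v),\psi(H-N[v])+1\}$. Applying the displayed inequality at a maximizing vertex gives $\conn{\II(H)}\ge\psi(H)$ by a one-line induction, so Theorem~\ref{t:edgedom} reduces to the purely combinatorial claim $\edgedom H\le\psi(H)$. One half of the matching recursion for $\edgedom{\cdot}$ is easy and is used repeatedly: for any non-isolated vertex $v$, if $D'$ is a minimum edge-dominating set of $H-N[v]$ and $w$ is a neighbour of $v$, then $D'\cup vw$ dominates $H$ (it dominates $H-N[v]$ since neighbourhoods only grow in $H$, it dominates $v$ through $vw$, and every other neighbour of $v$ is dominated by $vw$ since it is adjacent to $v$), so $\edgedom H\le\edgedom{H-N[v]}+1$.

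The inequality $\edgedom H\le\psi(H)$ is then proved by induction on $|V(H)|$. Let $v^{*}$ attain the maximum defining $\psi(H)$. If $\psi(H)=\psi(H-N[v^{*}])+1$, we are done by the easy half above together with the inductive hypothesis applied to $H-N[v^{*}]$; so suppose $\psi(H)=\psi(H-v^{*})$. Take a minimum edge-dominating set $D^{*}$ of $H-v^{*}$. If $D^{*}$ can be chosen so that it also dominates $v^{*}$ in $H$, then $D^{*}$ dominates $H$ and $\edgedom H\le|D^{*}|=\edgedom{H-v^{*}}\le\psi(H-v^{*})=\psi(H)$. The delicate case is when no minimum edge-dominating set of $H-v^{*}$ dominates $v^{*}$: here one argues that every such set avoids all of $N[v^{*}]$ and therefore lies in — and dominates — $H-N[v^{*}]$, and then, via a private-vertex analysis of a minimum edge-dominating set of $H$ itself, shows that $\psi$ still catches up so that the final ``$+1$'' is not lost. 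I expect this last step — controlling how an optimal edge-dominating set degrades under the vertex-removal recursion — to be the crux of the whole proof; it is exactly where the cited treatments of independence complexes do their work, and where the naive strategy of ``choose $v$ with $\edgedom{H-v}\ge\edgedom H$'' fails (every vertex of $C_{5}$ has $\edgedom{C_{5}-v}=1<2=\edgedom{C_{5}}$). The topological side, by contrast, is entirely contained in the single gluing step above.
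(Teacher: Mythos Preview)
The paper does not give its own proof of Theorem~\ref{t:edgedom}; it merely cites the literature (``implicitly proved in several papers \dots\ see~\cite{ABZ:independent}''). So there is no in-paper argument to compare against, and what you have written is an attempt to supply a proof where the paper supplies none.

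Your topological half is correct and standard: the decomposition of $\II(H)$ into the star of $v$ and $\II(H-v)$, with link $\II(H-N[v])$, together with the gluing lemma, does give
\[
\conn{\II(H)}\ \ge\ \min\bigl\{\conn{\II(H-v)},\ \conn{\II(H-N[v])}+1\bigr\}
\]
for every vertex $v$, and hence $\conn{\II(H)}\ge\psi(H)$ for your recursively defined $\psi$. The reduction of the theorem to the purely combinatorial inequality $\edgedom H\le\psi(H)$ is therefore valid.

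The gap is that you do not prove that inequality. In your ``delicate case'' you establish that any minimum edge-dominating set of $H-v^{*}$ which fails to dominate $v^{*}$ must lie in and dominate $H-N[v^{*}]$; this is correct, but it only gives $\edgedom{H-N[v^{*}]}\le\edgedom{H-v^{*}}$, and hence, via the easy half and the inductive hypothesis, $\edgedom H\le\psi(H-v^{*})+1=\psi(H)+1$ --- off by one. You then write that ``via a private-vertex analysis \dots\ $\psi$ still catches up'' and that you ``expect this last step \dots\ to be the crux of the whole proof.'' That sentence is an acknowledgment of a missing argument, not an argument. Your own $C_{5}$ example shows exactly why something non-trivial is needed here: for every vertex $v$ one has $\edgedom{C_{5}-v}=1<2=\edgedom{C_{5}}$, so the inductive bound $\psi(H-v)\ge\edgedom{H-v}$ is too weak on the ``delete $v$'' branch, and one must exploit that $\psi(P_{4})$ is strictly larger than $\edgedom{P_{4}}$. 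Nothing in the write-up explains how to do this in general. As written, then, the proposal is a correct reduction followed by an unproved lemma that carries the full weight of the theorem.
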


If we specialize this result to line graphs (and matching complexes),
we obtain a notion previously used in~\cite{Kai:note}. A \emph{2-path}
in a graph is a path of length 2. A set $P$ of 2-paths in $G$
\emph{dominates} a set $F$ of edges if every edge of $F$ is incident
with a 2-path in $P$. The \emph{2-path domination number} $\pathdom G$
is the minimum size of a set of 2-paths dominating $E(G)$ (or $\infty$
if $G$ contains a component with exactly one edge). Since $\pathdom G$
is equal to $\edgedom{L(G)}$, we have the following observation:

\begin{observation}\label{obs:pathdom}
  If $G$ is a graph and $\CC$ is its matching complex, then
  \begin{equation*}
    \conn\CC \geq \pathdom G.
  \end{equation*}
\end{observation}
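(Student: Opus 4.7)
The plan is to recognize the observation as a direct translation of Theorem~\ref{t:edgedom} through the standard identification of the matching complex of $G$ with the independence complex of the line graph $L(G)$, combined with the correspondence between 2-paths of $G$ and edges of $L(G)$.

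First, I would note that by definition $\CC = \II(L(G))$: a matching of $G$ is a set of pairwise nonadjacent edges of $G$, which is precisely an independent set of vertices in $L(G)$. Applying Theorem~\ref{t:edgedom} to $H = L(G)$ then yields $\conn\CC \geq \edgedom{L(G)}$, so the observation reduces to the identity $\edgedom{L(G)} = \pathdom G$ already announced in the sentence preceding the statement.

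Second, I would verify this identity via the canonical bijection between edges of $L(G)$ and 2-paths of $G$: an edge of $L(G)$ is an unordered pair $\{e,f\}$ of edges of $G$ sharing a vertex, which is exactly a 2-path. Under this bijection a dominating edge set $D$ of $L(G)$ becomes a set $P$ of 2-paths in $G$, and the edge-domination condition unfolds as follows: for every edge $e$ of $G$ (viewed as a vertex of $L(G)$), either $e$ lies in some 2-path $p\in P$, or $e$ is adjacent in $G$ to an edge belonging to some $p\in P$. In both cases $e$ shares a vertex with $p$, i.e., $e$ is incident with $p$ in the sense used in the definition of $\pathdom G$; so $P$ dominates $E(G)$. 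The converse implication is symmetric, yielding equality of the two minima. The conventions in the degenerate case also match: $L(G)$ contains an isolated vertex precisely when $G$ has a component consisting of a single edge, and then both $\edgedom{L(G)}$ and $\pathdom G$ are infinite by definition.

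There is no serious obstacle here; the observation is essentially a definitional repackaging of Theorem~\ref{t:edgedom} applied to the line graph. The only point requiring minor care is the bookkeeping between the two meanings of ``incidence'' (endpoint relation in $L(G)$ versus shared-vertex relation in $G$), and this translation is routine once the bijection between $E(L(G))$ and the 2-paths of $G$ is laid out explicitly.
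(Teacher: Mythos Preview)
Your proposal is correct and follows exactly the paper's approach: the paper simply notes that $\pathdom G = \edgedom{L(G)}$ and invokes Theorem~\ref{t:edgedom}, with the observation stated as an immediate consequence. Your write-up merely unpacks the details of this identification (the bijection between 2-paths of $G$ and edges of $L(G)$, and the matching of the degenerate-case conventions), which the paper leaves implicit.
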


We conclude this section with the definition of the induced
subcomplex. If $\CC$ is a complex on $E$ and $X\sub E$, then the
\emph{induced subcomplex} $\subc \CC X$ of $\CC$ on $X$ is the complex
on $X$ consisting of all the faces of $\CC$ contained in $X$.

%%%%%%%%%%%%%%%%%%%%%%%%%%%%%%%%%%%%%%%%%%%%%%%%%%%%%%%%%%%%%%%%%%%%%%

\section{Proof of Theorem~\ref{t:main}}
\label{sec:proof}

We now prove Theorem~\ref{t:main}. Let $k$ be a positive integer and
$G$ be a graph with $\arbf G \leq k + \eps$, where
\begin{equation*}
  \eps = \frac1{3k+2}.
\end{equation*}
Throughout this section, we write $E$ for $E(G)$, $\MM$ for the
cycle matroid of $G$ and $\CC$ for the matching complex of $G$. We
also let $\NN$ denote the matroid $(\union k \MM)^*$. Our aim is to
use Theorem~\ref{t:inters} to decompose $E$ into $k$ forests and a
matching in $G$. The following easy lemma provides the link:

\begin{lemma}\label{l:link}
  The set $E$ can be covered by $k$ forests and a matching if and only
  if there exists a base of $\NN$ which is a matching of $G$ (i.e., a
  face of $\CC$).
\end{lemma}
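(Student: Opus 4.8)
The plan is to prove Lemma~\ref{l:link} by translating both sides into the language of the dual matroid $\NN = (\union k \MM)^*$, using Lemma~\ref{l:dual} to make the connection explicit. First I would recall that a base of $\union k \MM$ is, by the Matroid union theorem (Theorem~\ref{t:union}) and the definition of the cycle matroid, a maximum-size union of $k$ forests of $G$; equivalently, $B \subseteq E$ is a base of $\union k \MM$ iff $E \sm B$ has no base of $\union k \MM$ properly containing a subset of it, i.e.\ iff $E \sm B$ is a base of $\NN$. So bases of $\NN$ are exactly the complements of maximal unions of $k$ forests in $G$. The claim then becomes: $E$ can be covered by $k$ forests and a matching iff $E$ has a matching $N$ such that $E \sm N$ is (contained in, hence after padding equal to) a union of $k$ forests that is a base of $\union k \MM$.

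The forward direction is straightforward: if $E = F_1 \cup \dots \cup F_k \cup N$ with each $F_i$ a forest and $N$ a matching, then $F_1 \cup \dots \cup F_k$ is an independent set of $\union k \MM$, hence extends to a base $B$ of $\union k \MM$; the edges added to form $B$ are taken from $N$, and the remaining edges $N' = E \sm B \subseteq N$ still form a matching, and $N'$ is disjoint from the base $B$ of $\union k \MM$, so $N'$ is independent in $\NN$. Extending $N'$ to a base of $\NN$ within $E \sm B$... here one must be slightly careful: a base of $\NN$ is the complement of a base of $\union k \MM$, and $E \sm B$ is such a complement, so $E \sm B$ itself is the base of $\NN$ we want — but it may strictly contain $N'$ and need not be a matching. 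The cleaner route is to choose $B$ to be a \emph{maximal} union of $k$ forests contained in $F_1 \cup \dots \cup F_k \cup N$ that contains $F_1 \cup \dots \cup F_k$; then $E \sm B$ is a base of $\NN$, and since $B \supseteq F_1 \cup \dots \cup F_k$ we get $E \sm B \subseteq N$, so $E \sm B$ is a matching.

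For the reverse direction, suppose $B^*$ is a base of $\NN$ which is a matching of $G$. By the definition of the dual matroid, $E \sm B^*$ is a base of $\union k \MM$, hence by Theorem~\ref{t:union} it is a union $F_1 \cup \dots \cup F_k$ of $k$ forests of $G$. Then $E = (E \sm B^*) \cup B^* = F_1 \cup \dots \cup F_k \cup B^*$ exhibits the desired cover of $E$ by $k$ forests and a matching. I expect no real obstacle here — the only point requiring attention is the asymmetry noted above in the forward direction, namely that one should build the union of forests as a maximal such set inside the given decomposition rather than extending an arbitrary independent set of $\union k \MM$ to an arbitrary base, so that the complementary matching property is preserved. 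Everything else is a direct unwinding of the definitions of dual matroid, base, and the matroid union theorem.
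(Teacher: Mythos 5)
Your proof is correct and follows essentially the same route as the paper: extend the union of $k$ forests to a base $B$ of $\union k \MM$, so that $E \sm B$ is a base of $\NN$ contained in the matching; and conversely, the complement of a matching base of $\NN$ is a base of $\union k \MM$, i.e.\ a union of $k$ forests. Your mid-argument worry that $E \sm B$ ``may strictly contain $N'$'' is a non-issue (you defined $N' = E \sm B$, so they are equal, and $E \sm B \sub N$ already follows from $B \supseteq F_1\cup\dots\cup F_k$), but the corrected version you settle on is exactly the paper's argument.
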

\begin{proof}
  We prove necessity first. Let a matching $M$ of $G$ be a base of
  $\NN$. Then $E\sm M$ is a base of $\NN^* = \union k \MM$. In
  particular, $E\sm M$ is the union of $k$ forests in $G$.

  To prove sufficiency, let $E = F\cup M'$, where $F$ is (the edge set
  of) a union of $k$ forests and $M'$ is a matching. Since $F$ is
  independent in $\union k \MM$, there is a base $B$ of $\union k \MM$
  containing $F$. Its complement $E\sm B$ is a base of $\NN$ that is
  disjoint from $F$ and hence contained in $M'$. As a subset of a
  matching, $E\sm B$ is a matching itself. This proves the lemma.
\end{proof}

We first characterize the independent sets of $\NN$. Let us say that a
set of edges $B\sub E$ is \emph{basic} if the subgraph induced by $B$
can be covered by $k$ forests and has the maximum possible number of
edges among the subgraphs of $G$ with this property. This is just
another way of saying that $B$ is a base of $\union k \MM$. From the
definition of the dual matroid, we get the following observation:

\begin{observation}\label{l:indep}
  A set $X\sub E$ is independent in $\NN$ if and only if it is
  disjoint from some basic set $B \sub E$.
\end{observation}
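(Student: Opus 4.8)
The plan is to read this off directly from the definition of the dual matroid. By the terminology introduced in the paragraph just above, a set $B\sub E$ is \emph{basic} precisely when it is a base of $\union k \MM$; and by the definition of $\NN$, together with the definition of the dual matroid recalled in Section~\ref{sec:matroid}, the independent sets of $\NN = (\union k \MM)^*$ are exactly the subsets of $E$ that are disjoint from some base of $\union k \MM$. Combining these two facts gives the statement verbatim: $X$ is independent in $\NN$ if and only if $X$ is disjoint from some basic set. Both directions are contained in this single equivalence, so nothing further is required.

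Should one wish to avoid invoking the defining property of the dual directly, an equivalent route is through the rank formula of Lemma~\ref{l:dual}. A set $X$ is independent in $\NN$ if and only if $\rankx\NN X = \size X$, and Lemma~\ref{l:dual} turns this equality into $\rankx{\union k \MM}{E\sm X} = \rankx{\union k \MM} E$, i.e., $E\sm X$ spans $\union k \MM$, i.e., $E\sm X$ contains a base $B$ of $\union k \MM$; this $B$ is a basic set disjoint from $X$. Conversely, any basic set $B$ disjoint from $X$ is contained in $E\sm X$, which forces $\rankx{\union k \MM}{E\sm X}$ to be full and hence $X$ to be independent in $\NN$ by the same computation.

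There is no genuine obstacle here: the observation is purely definitional, and it is recorded only so that the combinatorial notion of a basic set can be used in place of matroid-dual language when Theorem~\ref{t:inters} is applied in the proof of Theorem~\ref{t:main}.
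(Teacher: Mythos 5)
Your first paragraph is exactly the paper's argument: the paper presents the observation as an immediate consequence of the definition of the dual matroid together with the just-introduced terminology (basic set $=$ base of $\union k\MM$), with no separate proof given. The alternative route via Lemma~\ref{l:dual} is a harmless extra, but the main line matches the paper's reasoning.
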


In Theorem~\ref{t:inters}, the restriction to sets that are
complements of flats will be crucial for us. The reason is given by
the following lemma and Lemma~\ref{l:conn} below:

\begin{lemma}\label{l:mindeg}
  If $X\sub E$ is a flat in $\NN$, then the subgraph $\eind G {E \sm
    X}$ of $G$ has minimum degree at least $k+1$.
\end{lemma}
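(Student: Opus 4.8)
The plan is to argue by contradiction: suppose $X$ is a flat of $\NN$ but some vertex $v$ of $\eind G{E\sm X}$ has degree at most $k$ in this subgraph. Let $e\in E\sm X$ be an edge incident with $v$. The strategy is to exhibit a circuit $C$ of $\NN$ with $e\in C\sub X\cup e$; by Lemma~\ref{l:circuit} this forces $e\in X$, contradicting $e\in E\sm X$. Thus everything reduces to producing a suitable circuit of $\NN = (\union k \MM)^*$ through $e$ contained in $X\cup e$.

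To find such a circuit, I would use Observation~\ref{l:indep}: a set is dependent in $\NN$ exactly when it meets every basic set, i.e.\ when it cannot be avoided by any base of $\union k \MM$. Equivalently (via Lemma~\ref{l:dual} applied to $\union k \MM$, or directly from the rank formula in Corollary~\ref{cor:mk}), a set $Y$ is \emph{spanning} in $\union k \MM$ — meaning $E\sm Y$ has full rank $k\cdot\rankx{\MM}E$ — precisely when $E\sm Y$ is independent in $\NN$, so the circuits of $\NN$ are the minimal sets whose complement is nonspanning in $\union k\MM$. Concretely, I want to choose a small set $S\sub X\cup e$ containing $e$ such that deleting $S$ drops the $\union k\MM$-rank, and then extract a minimal such set as the circuit $C$. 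The natural candidate for $S$ is built from the edges at $v$ together with edges needed to witness the rank drop inside $X$: since $v$ has degree $\le k$ in $\eind G{E\sm X}$, all but at most $k$ of the edges at $v$ lie in $X$, and using Corollary~\ref{cor:mk} with a well-chosen test set $T$ (roughly, the edge set of the graph induced on $V(\eind G X)\cup\{v\}$) one checks that removing $e$ together with the $X$-edges at $v$ leaves the remaining edges unable to span $v$ with multiplicity $k$ in $\union k \MM$. This gives a dependent subset of $X\cup e$ containing $e$, hence a circuit $C$ of $\NN$ with $e\in C\sub X\cup e$.

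The main obstacle I anticipate is the rank bookkeeping: showing precisely that $e$ cannot be removed from a minimal such dependent set — that is, that the circuit genuinely contains $e$ rather than living entirely inside $X$. The cleanest way is to pick $S$ minimal with respect to inclusion among subsets of $X\cup e$ that are dependent in $\NN$ and that contain $e$ in their "closure obstruction"; one must verify such a minimal $S$ is itself a circuit and still contains $e$. This is where the degree bound $k+1$ (as opposed to something weaker) enters: if $v$ had degree $\ge k+1$ outside $X$, then even after using all $X$-edges at $v$ the vertex $v$ could still be spanned $k$ times by edges outside $X\cup e$, and no such circuit would exist. So the argument should make the degree-$k$ assumption interact directly with the multiplicity $k$ in $\union k\MM$ via the term $k\cdot\rankx\MM T$ in Corollary~\ref{cor:mk}, and the contradiction with Lemma~\ref{l:circuit} closes the proof.
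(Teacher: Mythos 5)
Your overall plan matches the paper's: produce a circuit $C$ of $\NN$ with $e\in C\sub X\cup e$ and then invoke Lemma~\ref{l:circuit} to force $e\in X$, a contradiction. You also correctly flag the crux of the matter, namely ensuring that the circuit genuinely contains $e$. However, that crux is exactly where the proposal stops short: the ``rank bookkeeping'' with Corollary~\ref{cor:mk} and a ``well-chosen test set $T$'' is asserted, not carried out, and the phrase ``contain $e$ in their closure obstruction'' is not a defined condition that a minimality argument can be run against. If you simply pick an inclusionwise minimal dependent subset of $X\cup e$, nothing prevents it from living entirely inside $X$; the degree-$k$ assumption has to enter through a concrete mechanism, and none is supplied. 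As it stands, the proposal identifies the right target and the right lemma but does not actually construct the circuit.

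The missing idea is to \emph{start from an independent set}, so that $e$ is forced into any circuit. Concretely, the paper takes $I$ to be a maximal independent subset of $\NN$ contained in $X\cap E(v)$ (the $X$-edges at $v$), so any circuit inside $I\cup e$ must contain $e$. It then shows $I\cup e$ is dependent by a forest-exchange argument: if some basic set $B=E(F_1\cup\dots\cup F_k)$ were disjoint from $I\cup e$, then because $B$ avoids $e$ and $v$ has at most $k$ incident edges outside $X$, one forest $F_1$ uses no edge of $E(v)\sm X$; adding $e$ to $F_1$ creates a cycle whose other edge $f$ at $v$ lies in $X\sm I$, and swapping gives the basic set $B\cup e\sm f$ disjoint from $I\cup f$, contradicting the maximality of $I$. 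Extracting a minimal $I'\sub I$ with $I'\cup e$ dependent then yields the desired circuit. This is the step your proposal would need to supply; without it, the argument is not complete.
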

\begin{proof}
  Suppose that $\eind G {E \sm X}$ contains a vertex $v$ of
  degree $d(v) \leq k$; let $E(v)$ denote the set of edges of $G$
  incident with $v$. Let $I$ be an inclusionwise maximal independent
  set of $\NN$ contained in $X\cap E(v)$. (Such a set exists since
  $\emptyset$ is independent.)

  Since $v$ has nonzero degree in $\eind G {E \sm X}$, we may choose
  an edge $e$ of $E(v)\sm X$. We claim that $I \cup e$ is dependent in
  $\NN$. Suppose that this is not the case. By Lemma~\ref{l:indep},
  some basic set $B\sub E$ is disjoint from $I\cup e$. Let us choose
  edge-disjoint forests $F_1,\dots,F_k$ such that $B =
  E(F_1\cup\dots\cup F_k)$.

  Since $\size{E(v)\sm X} \leq k-1$, one of the forests (say, $F_1$)
  does not contain any edge of $E(v)\sm X$. Let $F'_1$ be obtained by
  adding $e$ to $F_1$. By the maximality of $B$, $F'_1$ is not a
  forest. Thus, $F'_1$ contains a unique cycle $C$ and $e\in
  E(C)$. Let $f$ be the other edge of $C$ incident with $v$. We know
  that $f\in X$ and $f\notin I$. Since the set $I\cup f$ is disjoint
  from the set $B\cup e\sm f$ which is clearly basic, $I\cup f$ is an
  independent set of $\NN$. This contradiction with the choice of
  $I$ proves that $I\cup e$ is dependent as claimed.

  Let $I'$ be an inclusionwise minimal subset of $I$ such that $I'\cup
  e$ is dependent. Then $I'\cup e$ is a circuit of $\NN$ contained in
  $X\cup e$, contradicting Lemma~\ref{l:circuit}. This finishes the
  proof.
\end{proof}

We will now see that Lemma~\ref{l:mindeg} makes it possible to lower
bound the connectivity of the matching complexes of the subgraphs that
appear in our application of Theorem~\ref{t:inters} in terms of their
order. Without a minimum degree condition, such a bound would not be
possible, as is seen by considering the star $K_{1,n}$, whose matching
complex has $\eta = 0$ for every $n\geq 2$. The proof of the following
lemma is based on an idea due to Ond\v{r}ej Ruck\'{y} which improves
our original argument and yields a better bound.

\begin{lemma}\label{l:conn}
  Let $H$ be a graph with $\arbf H \leq k + \eps$ (where $\eps =
  1/(3k+2)$ as defined at the beginning of this section) and with
  minimum degree at least $k+1$. Let $\KK$ be the matching complex of
  $H$. Then
  \begin{equation*}
    \conn\KK \geq \eps \cdot \size{V(H)}.
  \end{equation*}
\end{lemma}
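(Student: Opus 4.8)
By Observation~\ref{obs:pathdom}, it suffices to show that $\pathdom H \geq \eps\cdot\size{V(H)}$, i.e.\ that no set of fewer than $\eps\cdot\size{V(H)}$ 2-paths can dominate $E(H)$. The plan is to argue by contradiction: suppose $P$ is a set of 2-paths dominating $E(H)$ with $\size P < \eps\cdot\size{V(H)}$, and derive a violation of the fractional arboricity bound. Let $W$ be the set of vertices that lie on some 2-path of $P$; then $\size W \leq 3\size P$. The key point is that every edge of $H$ must be incident with a 2-path of $P$, hence incident with a vertex of $W$ that is the \emph{center} of such a 2-path; in particular every edge of $H$ meets $W$. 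So if we set $U = V(H)\sm W$, then $U$ is an independent set in $H$, and moreover every edge incident with a vertex of $U$ has its other endpoint in $W$ — in fact its other endpoint must be a center vertex.

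Now I would count edges in the subgraph $H' = \eind{H}{E(H)}$ restricted appropriately. Using the minimum degree hypothesis, each vertex of $U$ has degree at least $k+1$ in $H$, and all these edges go to $W$. Let $W_c\sub W$ be the set of center vertices of 2-paths in $P$ (so $\size{W_c}\leq\size P$), and observe that an edge is dominated only if it is incident with some vertex of $W_c$. Hence every edge of $H$ meets $W_c$, so $V(H)\sm W_c$ is also independent and $\size{E(H)} \le \sum_{w\in W_c} d_H(w)$ is not immediately bounded — so instead I would take the subgraph $H_1$ consisting of $W_c$ together with $U$ and all edges between them. Every vertex of $U$ sends $\geq k+1$ edges into $W$, but I need them into $W_c$; since every edge at a $U$-vertex is dominated and a 2-path dominating it that avoids $U$ (as $U$ is independent, the center cannot be the $U$-endpoint unless the 2-path uses two $U$-vertices, impossible) has its center in $W_c$ adjacent to that $U$-vertex, indeed $U$-vertices send $\geq k+1$ edges to $W_c$. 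Thus $\size{E(H_1)}\geq (k+1)\size U$ while $\size{V(H_1)} = \size{W_c}+\size U$, giving
\begin{equation*}
  \arbf H \;\geq\; \frac{\size{E(H_1)}}{\size{V(H_1)}-1} \;\geq\; \frac{(k+1)\size U}{\size{W_c}+\size U-1}.
\end{equation*}

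With $\size{U} = \size{V(H)}-\size W \geq \size{V(H)} - 3\size P > \size{V(H)}(1-3\eps)$ and $\size{W_c}\leq\size P < \eps\size{V(H)}$, one substitutes and checks that the right-hand side exceeds $k+\eps$, contradicting $\arbf H\leq k+\eps$; this is exactly where the value $\eps = 1/(3k+2)$ is forced, since $3k+2$ is the quantity that makes $(k+1)(1-3\eps) = (k+\eps)(1-2\eps)$ (or the analogous inequality) fail to leave slack. The main obstacle I anticipate is the bookkeeping in the previous paragraph: one must be careful that the $k+1$ edges leaving each $U$-vertex really land in the center set $W_c$ and not merely in $W$, and that double-counting of edges between two center vertices does not spoil the bound — it may be cleanest to choose $H_1$ as the bipartite-like subgraph induced on $U\cup W_c$ and discard all edges inside $W_c$, as sketched, so that the edge count $(k+1)\size U$ is clean and only the vertex count needs the estimates on $\size{W_c}$ and $\size W$. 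A secondary subtlety is handling the degenerate cases ($\size P$ very small, or $U$ empty), but those only make the inequality easier.
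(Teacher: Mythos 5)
Your instinct to convert the problem into a 2-path domination bound via Observation~\ref{obs:pathdom} and to count edges leaving the undominated vertex set $U$ is exactly the right starting point, and you correctly flag the crux of the matter, but the step you hope is ``bookkeeping'' is in fact a genuine gap. The claim that every edge $uw$ with $u\in U$ has its other endpoint $w$ in the center set $W_c$ is false. Unwinding $\pathdom H = \edgedom{L(H)}$: a 2-path $p=a\text{--}b\text{--}c$ dominates an edge $e$ iff $e$ has an endpoint in $\{a,b,c\}$, not just at the center $b$. So if $e=uw$ is dominated by $p$ and $u\notin W$, all you can conclude is $w\in\{a,b,c\}\sub W$; $w$ may perfectly well be an endpoint $a$ or $c$ of $p$. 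Your parenthetical (``the center cannot be the $U$-endpoint\dots'') rules out $b=u$, which is true but irrelevant --- it does not force $w=b$. And the loss is not recoverable by tidier accounting: if you replace $W_c$ by $W$, you get $\arbf H \geq (k+1)\size U/(\size{V(H)}-1) > (k+1)(1-3\eps)$, and requiring this to exceed $k+\eps$ only yields a contradiction when $\eps < 1/(3k+4)$, which is strictly weaker than the $1/(3k+2)$ you need.

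The paper closes this gap by not restricting to the bipartite-like subgraph $H_1$ at all. It sets $H^*$ to be the union of all 2-paths in the dominating set, with $n^*$ vertices and $m^*$ edges, and counts globally: since every edge at a vertex $v\notin V(H^*)$ is absent from $H^*$ but its other end lies in $V(H^*)$, one gets $m-m^*\geq (k+1)(n-n^*)$. Combining with $m<(k+\eps)n$ and the structural bound $n^*\leq\tfrac32 m^*$ (each component of $H^*$ is connected with at least three vertices) gives
\begin{equation*}
(1-\eps)n < (k+1)n^* - m^* \leq \Bigl(\tfrac32(k+1)-1\Bigr)m^* = \tfrac{3k+1}{2}\,m^*,
\end{equation*}
hence $m^* > 2\eps n$ and $\size D \geq m^*/2 > \eps n$. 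The extra savings you were trying to squeeze out of $W_c$ is instead obtained from the $-m^*$ term on the right together with $n^*\leq\tfrac32 m^*$; your bipartite subgraph throws that term away, which is exactly what drops the constant to $1/(3k+4)$.
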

\begin{proof}
  Let $m$ and $n$ be the number of edges and vertices of $H$,
  respectively. Let $\delta$ be the minimum degree of $H$. Suppose
  that $D$ is a dominating set of 2-paths and let $H^*$ be the
  subgraph of $H$ obtained as the union of all the 2-paths in $D$. Let
  the number of vertices and edges of $H^*$ be denoted by $n^*$ and
  $m^*$, respectively. 

  Let $v$ be a vertex not contained in $H^*$. None of the (at least
  $\delta$) edges incident with $v$ is contained in $H^*$; on the other
  hand, the other endvertex of any such edge must be contained in
  $H^*$, since $D$ is dominating. This means that the number of edges
  not in $H^*$, which is $m-m^*$, can be bounded as
  \begin{equation*}
    m-m^* \geq \delta(n-n^*).
  \end{equation*}
  Together with the minimum degree assumption, this implies
  \begin{equation}
    \label{eq:both}
    (k+1)n - m \leq (k+1)n^* - m^*.
  \end{equation}
  We aim to eliminate $m$ and $n^*$ from this inequality using
  suitable bounds in terms of the other parameters.

  An easy argument (by induction on the number of paths in $D$) yields
  an upper bound on $n^*$ in terms of $m^*$, namely
  \begin{equation}
    \label{eq:star}
    n^* \leq \frac32 \cdot m^*.
  \end{equation}
  On the other hand, a bound on $m$ is implied by the assumption about
  $\arbf H$. Since
  \begin{equation*}
    k + \eps \geq \arbf H \geq \frac m {n-1} > \frac m n,
  \end{equation*}
  we obtain
  \begin{equation}
    \label{eq:nostar}
    m < n\cdot (k+\eps).
  \end{equation}

  Combining \eqref{eq:star} and \eqref{eq:nostar} with
  \eqref{eq:both}, we find
  \begin{equation*}
    m^* > n \cdot \frac{1-\eps}{\frac32(k+1)-1} = n\cdot\frac{2-2\eps}{3k+1}
    = n\cdot\frac2{3k+2} = 2\eps n.
  \end{equation*}
  Since $\size D\geq m^*/2$, the size of $D$ (an arbitrary dominating
  set of 2-paths) is at least $\eps n$. Observation~\ref{obs:pathdom}
  therefore implies that $\conn\KK \geq \eps n$ as claimed.
\end{proof}

We will need one more auxiliary result, an observation on the
definition of fractional arboricity:
\begin{lemma}\label{l:arb}
  If $G$ is a graph and $H\sub G$, then
  \begin{equation*}
    \size{E(H)} \leq \arbf G \cdot (\size{V(H)} - c(H)),
  \end{equation*}
  where $c(H)$ denotes the number of components of $H$.
\end{lemma}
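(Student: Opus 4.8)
The plan is to reduce the statement to the connected case, where it is nothing but the defining inequality for fractional arboricity. First I would let $H_1,\dots,H_c$ be the connected components of $H$, where $c=c(H)$, so that $\size{E(H)}=\sum_{i=1}^c\size{E(H_i)}$ and $\size{V(H)}=\sum_{i=1}^c\size{V(H_i)}$. Each $H_i$ is a subgraph of $G$, so whenever $H_i$ has at least two vertices the definition
\begin{equation*}
  \arbf G=\max_{H'\sub G}\frac{\size{E(H')}}{\size{V(H')}-1}
\end{equation*}
applied to $H'=H_i$ gives $\size{E(H_i)}\le\arbf G\cdot(\size{V(H_i)}-1)$. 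A component that is a single isolated vertex satisfies this trivially, since both sides equal $0$; and if some component were a single vertex carrying a loop, then $\arbf G=\infty$ and there would be nothing to prove.

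Next I would simply add up these (at most $c$) inequalities and use the two identities above, obtaining
\begin{equation*}
  \size{E(H)}\le\arbf G\cdot\sum_{i=1}^c\bigl(\size{V(H_i)}-1\bigr)=\arbf G\cdot\bigl(\size{V(H)}-c\bigr),
\end{equation*}
which is exactly the claimed bound.

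There is essentially no real obstacle here; the only point requiring care is that one must split $H$ into its components \emph{before} invoking the definition of $\arbf G$. Applying the definition directly to $H$ yields only the weaker estimate $\size{E(H)}\le\arbf G\,(\size{V(H)}-1)$, which is off by the factor accounting for the $c(H)-1$ extra ``free'' vertices; the componentwise argument is precisely what converts the $-1$ into $-c(H)$.
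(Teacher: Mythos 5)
Your proof is correct and follows essentially the same route as the paper's: decompose $H$ into connected components, apply the defining inequality of $\arbf G$ to each component, and sum. The extra remarks about isolated vertices and loops are careful touches, but the underlying argument is identical.
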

\begin{proof}
  The lemma holds for a connected subgraph $H$. If $H$ has components
  $H_1,\dots,H_\ell$, then for each $i=1,\dots,\ell$, we know that
  \begin{equation*}
    \size{E(H_i)} \leq \arbf G \cdot (\size{V(H_i)}-1)
  \end{equation*}
  and the claim follows by summing these inequalities.
\end{proof}

Let us finish the proof of Theorem~\ref{t:main}. By
Lemma~\ref{l:link}, we will be done if we can find a base of $\NN$
which is a face of $\CC$ (recall that $\CC$ is the matching complex of
$G$). In view of Theorem~\ref{t:inters}, we consider an arbitrary set
$X$ which is the complement of a flat of $\NN$, and aim to verify
condition~\eqref{eq:inters}. 

Using Lemma~\ref{l:dual} and then Corollary~\ref{cor:mk} (for the
equality on the last line), the right hand side of \eqref{eq:inters}
can be rewritten as
\begin{align*}
  \rankx\NN E - \rankx\NN{E\sm X} &= \Bigl( 
  \size E + \rankx{\union k \MM}\emptyset - \rankx{\union k \MM}E
  \Bigr)\\
  &\qquad- \Bigl(
  \size {E\sm X} + \rankx{\union k \MM}X - \rankx{\union k \MM}E
  \Bigr)\\
  &= \size X - \rankx{\union k \MM}X\\
  &= \size X - \min_{T\sub X}\Bigl( \size{X\sm T} + k\cdot\rankx\MM T\Bigr).
\end{align*}
Consequently, to be able to apply Theorem~\ref{t:inters}, it suffices
to verify that
\begin{equation}\label{eq:verify}
  \conn{\subc\CC X} \geq \size T - k\cdot\rankx\MM T
\end{equation}
for every $T\sub X$.

Consider a fixed $T\sub X$. Let $n(T)$ and $n(X)$ denote the number of
vertices of $\eind G T$ and $\eind G X$, respectively, and let $c(T)$
be the number of components of $\eind G T$. By Lemma~\ref{l:rank},
$\rankx\MM T = n(T)-c(T)$.

As for the left hand side of \eqref{eq:verify}, observe that $\subc\CC
X$ is just the matching complex of $\eind G X$. Since $X$ is the
complement of a flat of $\NN$, $\eind G X$ has minimum degree at least
$k+1$ (Lemma~\ref{l:mindeg}). By Lemma~\ref{l:conn}, $\conn{\subc\CC
  X} \geq \eps n(X)$.

With the aim of establishing~\eqref{eq:verify} (for the given $X$ and
$T$) in mind, we write
\begin{align*}
  \size T &\leq (k + \eps)\cdot (n(T)-c(T))\\
  &\leq k \cdot (n(T)-c(T)) + \eps n(X)\\
  &\leq k \cdot \rankx\MM T + \conn{\subc\CC X},
\end{align*}
where we use the fractional arboricity assumption and
Lemma~\ref{l:arb} for the first inequality, the inclusion $T\sub X$
for the second one, and the above interpretation of $\rankx\MM T$
together with Lemma~\ref{l:conn} for the last one. The resulting upper
bound for $\size T$ is equivalent to \eqref{eq:verify}. Thus, we have
verified \eqref{eq:verify} for any set $X$ which is the complement of
a flat of $\NN$ and any $T\sub X$, and a final invocation of
Theorem~\ref{t:inters} completes the proof.

%%%%%%%%%%%%%%%%%%%%%%%%%%%%%%%%%%%%%%%%%%%%%%%%%%%%%%%%%%%%%%%%%%%%%%

\section*{Acknowledgment}
\label{sec:acknowledgment}

We are indebted to Ond\v{r}ej Ruck\'{y} for an improvement of the
proof of Lemma~\ref{l:conn}.

%%%%%%%%%%%%%%%%%%%%%%%%%%%%%%%%%%%%%%%%%%%%%%%%%%%%%%%%%%%%%%%%%%%%%%


\begin{thebibliography}{99}

\bibitem{AB:intersection} R. Aharoni and E. Berger, The intersection
  of a matroid and a simplicial complex, \emph{Trans. Amer. Math.
    Soc.} \textbf{358} (2006), 4895--4917.

\bibitem{ABZ:independent} 
  R. Aharoni, E. Berger and R. Ziv, Independent systems of
  representatives in weighted graphs, \emph{Combinatorica} \textbf{27}
  (2007), 253--267.

\bibitem{Bjo:topological} 
  A. Bj\"{o}rner, Topological methods, Chapter 34 in \emph{Handbook of
    Combinatorics, Vol. 2}, Elsevier, 1995, pp. 1819--1872.

\bibitem{BKL:homotopy} 
  A. Bj\"{o}rner, B. Korte and L. Lov\'{a}sz, Homotopy properties of
  greedoids, \emph{Adv. Appl. Math.} \textbf{6} (1985), 447--494.

\bibitem{BKSY:decomposing} 
  O. V. Borodin, A. V. Kostochka, N. N. Sheikh and G. Yu, Decomposing
  a planar graph with girth 9 into a forest and a matching,
  \emph{European J. Combin.} \textbf{29} (2008), 1235--1241.

\bibitem{Die:graph} 
  R. Diestel, \emph{Graph Theory}, Springer, 2005.

\bibitem{Edm:minimum} 
  J. Edmonds, Minimum partition of a matroid into independent subsets,
  \emph{J.~Res. Nat. Bur. Standards} \textbf{69} (1965) 67--72.

% \bibitem{Edm:submodular} 
%   J. Edmonds, Submodular functions, matroids and certain polyhedra,
%   in: \emph{Combinatorial structures and their applications}
%   (Proc. Calgary International Conf. 1969), pp. 69--87, Gordon and
%   Breach, New York, 1970.

\bibitem{G:covering} 
  D. Gon\c{c}alves, Covering planar graphs with forests, one having
  bounded maximum degree, \emph{J. Combin. Theory, Ser. B} \textbf{99}
  (2009), 314--322.

\bibitem{Kai:note} 
  T. Kaiser, A note on interconnecting matchings in graphs,
  \emph{Discrete Math.} \textbf{306} (2006), 2245--2250.

% \bibitem{Mes:clique} 
%   R. Meshulam, The clique complex and hypergraph matching,
%   \emph{Combinatorica} \textbf{21} (2001), 89--94.

% \bibitem{Mes:domination} 
%   R. Meshulam, Domination numbers and homology,
%   \emph{J.~Combin. Theory Ser. A} \textbf{102} (2003), 321--330.

\bibitem{MORZ:covering} 
  M. Montassier, P. Ossona de Mendez, A. Raspaud and X. Zhu,
  Decomposing a graph into forests, submitted for publication.

\bibitem{NW:decomposition}
  C. St. J. A. Nash-Williams, Decomposition of finite graphs into
  forests, \emph{J. London Math. Soc.}
  \textbf{39} (1964), 12.

\bibitem{NW:application}
  C. St. J. A. Nash-Williams, An application of matroids to graph
  theory, in: \emph{Theory of graphs --- International Symposium ---
    Th\'{e}orie de graphes --- Journ\'{e}es internationales
    d'\'{e}tude} (Rome, 1966; P. Rosenstiehl, ed.), Gordon and Breach,
  New York, and Dunod, Paris, 1967, pp. 263--265.

\bibitem{Oxl:matroid} 
  J. G. Oxley, \emph{Matroid Theory}, Oxford University Press, Oxford,
  1992.

\bibitem{P:graphes} C. Payan, Graphes \'{e}quilibr\'{e}s et
  arboricit\'{e} rationnelle, \emph{European J. Combin.} \textbf{7}
  (1986) 263--270.

\bibitem{Sch:combinatorial}
  A. Schrijver, \emph{Combinatorial Optimization}, Springer, 2003.

\end{thebibliography}
\end{document}